\newtheorem{theorem}{Theorem}[section]
\newtheorem{proposition}[theorem]{Proposition}
\newtheorem{corollary}[theorem]{Corollary}
\newtheorem{example}[]{Example}[section]
\newtheorem{definition}[theorem]{Definition}
\newtheorem{remark}[theorem]{Remark}
\newcommand*\Prob{\mathop{}\!\mathbb{P}}
\newcommand*\E{\mathop{}\!\mathbb{E}}
\newcommand{\vect}[1]{\pmb{#1}}
\newcommand{\mat}[1]{\boldsymbol{\bm #1}}
\title{Fractional Inhomogeneous Multi-state Models in Life Insurance}
\date{\today}
\author[M. \smash{Bladt}]{Martin Bladt}
\address[M. Bladt]{Department of Actuarial Science, Faculty of Business and Economics, University of Lausanne, CH-1015 Lausanne, Switzerland}
\email{{martin.bladt@unil.ch}}
\begin{document}
\maketitle
\begin{abstract}
In this paper, we demonstrate through the use of matrix calculus a transparent analysis of fractional inhomogeneous Markov models for life insurance where transition matrices commute. The resulting formulae are intuitive matrix generalizations of their single-state counterparts, and the absorption times are matrix versions of well-known scalar distributions. A further advantage of this approach is that it allows extending the analysis to the non-Markovian case where sojourns are Mittag-Leffler distributed, and where the absorption times are fractional phase-type distributed. Considering deterministic time transforms gives rise to fractional inhomogeneous phase-type distributions as absorption times. The latter underlying processes are an example of a regime where not only the present but also the history of a policyholder influences its future evolution. The sub-exponential nature of stable distributions translates into the multi-state insurance model as a random longevity risk at any given state of the chain.
\end{abstract} 

\section{Introduction}

The use of Markov chains in life insurance has been around for many decades (cf. \cite{hoem1969markov}), and their use has had a continued interest in multi-state life insurance models. Of main interest is the calculation of reserves, for which \cite{norberg1991reserves} is a classical reference (see also \cite{norberg1995differential} for higher moments, and \cite{hesselager1996probability} for the distribution of discounted future payments). The now-standard setup considers inhomogeneous Markov models for future payments, and numerical calculation of the reserve and premium calculation can be carried out effectively solving ordinary differential equations. Recently, \cite{bladt2020matrix} introduced product integrals as a means to obtain moments of discounted future payments in a very general Markov setting. The latter reference presents multi-state life insurance calculations using matrix algebra and calculus only, and this approach will also be taken in the present paper. 

We {\color{black} first} consider in this paper the calculation of the reserve for multi-state models. For the case of inhomogeneous Markov models, we consider parametric families which give rise to tractable absorption distributions in terms of inhomogeneous phase-type distributions (cf. \cite{albrecher2019inhomogeneous}). The setting can in principle be deduced from the general theory in \cite{bladt2020matrix} (corresponding to transition matrices that commute). Still, the present formulae are novel, and the principle of duality as introduced below is unique to this setting. 

Subsequently, we refine our analysis and consider a time-fractional version of the previous setup. {\color{black} Time-fractional models and their related Mittag-Leffler distributed inter-event times are a natural generalization of Markov models with exponential sojourn times, and one of the only known cases where the absorption time is fully explicit, due to the calculations being so similar to the exponential case but using fractional calculus.} Multi-state models for time-fractional jump processes, or inverse subordinated Markov processes, have been considered in \cite{donatien}, where the jump process of \cite{mml} is viewed from a life insurance perspective. The absorption times are fractional phase-type distributed (cf. \cite{mml}, and \cite{mmml, mfph}), and have been successfully adapted to model non-life insurance data. However, the latter model is, in a sense, not immediately suitable to model mortality data since they do not possess any moment of order $k>\alpha$, where $0<\alpha<1$ in the fractional case (when $\alpha=1$ in the Markov case, there is a regime-switch, and there are moments of all orders). In particular, they do not possess a mean, due to their very heavy tails.

{\color{black} We propose} inhomogeneous versions of the fractional process of \cite{mml}, that is, we deterministically time-transform the chain with a function $g$, {\color{black}which gives rise to the main novelties of the contribution}. We introduce a new class of distributions for their absorption times, namely the fractional inhomogeneous phase-type distributions ($\mbox{IPH}_\alpha$, $\alpha\in(0,1]$) which for parametric transforms lead to finite-moments distributions, {\color{black}addressing the infinite mean drawback of a time-fractional chain}. For the particular case $\alpha=1$ we reduce to the usual inhomogeneous Markov setup, and for the specific case $g(t)=t$ we recover the form of \cite{donatien}.

When generalizing the results to the time-fractional case, it is possible to partially or fully retrieve analogous formulae to the Markov case. The reserve for time $t=0$ is fully explicit in terms of functions of matrices, which can be computed efficiently using Matrix algebra for very general structures for the underlying intensity matrix (for instance, for Generalized Coxian distributions). For $t>0$, only the conditional reserve is available, and the actual reserve would have to be simulated.

There are several features that the time-fractional setup possesses which are not present in the Markov case. Firstly, the chain is no longer memoryless, and thus the evolution of an individual in a given state does not only depend on the present but also the past. Secondly, the sub-exponential behaviour (before the deterministic time-transform with $g$) of the sojourn times concentrates the distribution of the absorption time more on both tails. It hence serves as a mathematical model for individuals who traverse a state unusually quickly or unusually slowly. {\color{black} In essence, the last condition can be interpreted as a non-traditional longevity risk from the insurers' perspective}. {{\color{black} Finally, fractional specifications and their generalizations are examples of semi-Markov models which share the same mathematical toolbox for deriving explicit formulae with their Markov counterparts. This cannot be said about other semi-Markov constructions. This property suggests the use of fractional distributions not only as mathematical objects, but also as well-motivated and physically interpretable models.}
}

{\color{black} The use of inhomogeneity transforms for Markov models and their time-fractional counterparts has recently been studied in several contexts relevant to insurance applications, mainly related to non-life insurance. To name a few, \cite{albrecher2019inhomogeneous} deal with inhomogeneous Markov models, \cite{bladt2020inh} consider an extension which incorporates covariates, and \cite{mml} treat a power-transformed fractional chain. One of the common takeaways is that the use of an inhomogeneity function can help with the correct tail and moments specifications, which in turn makes a statistical analysis feasible with just a few underlying states. The present contribution brings these models to life insurance, where the underlying sub-exponential behaviour can be interpreted as a longevity risk, but where the regularly-varying tails are tempered by the inhomogeneity function. In the new scale, the transformed sub-exponential behaviour is not only apparent in the right tail, but also in the left tail, resulting in an ``impermanence" risk, which is relevant to lump sums paid at transitions to the absorbing state. }

The remainder of the paper is structured as follows. In Section 2 we present some basic properties of homogeneous Markov models and phase-type distributions. In Section 3, we briefly survey inhomogeneous Markov jump processes and inhomogeneous phase-type distributions, before turning into the derivation of the reserve formulae. We also provide an interpretation of the terms involved in the formula for the reserve in terms of dual distributions and consider some examples for parametric $g$ functions. Section 4 is devoted to the study of the time-fractional generalization of the inhomogeneous Markov models. We provide a short reminder of the basic properties of the underlying fractional chain, {\color{black} a motivational study, and then consider the deterministically-transformed versions which naturally lead to the definition of fractional inhomogeneous phase-type distributions and the derivation of the reserve. In section $5$ we provide a numerical example of life insurance contracts with underlying fractional structures. We conclude in Section 6. }

\section{{\color{black}Preliminaries}}

Consider a finite state-space $E=\{1,\dots, p\}$ corresponding to the different states of a person in a life-insurance contract. These states are for convenience labeled as natural numbers, but can represent for instance: alive, disability, illness, death. Consider also a Markov jump process $(Z_t)$ evolving in $E$, with transition probabilities
\begin{align*}
p_{i,j}(s,t)=\Prob(Z_t=j|Z_s=i),
\end{align*}
which in matrix notation amount to the transition matrix $\mat{P}(s,t)=(p_{ij}(s,t))_{i,j=1,\dots p}$. This process models the dynamics of an insured policyholder within the different states from a classical point of view, and in particular depends on the transition rates and the sojourn time distributions. The Markov property implies that the Chapman-Kolmogorov equations are satisfied, which in turn gives an explicit form of $\mat{P}$, as the following matrix exponential 
\begin{align}\label{Markov_transition_mat}
\mat{P}(s,t)=\exp(\mat{\Lambda}(t-s)),
\end{align}
where $\mat{\Lambda}$ is an intensity matrix. The latter satisfies that $\lambda_{ij}>0$ for $i\neq j$, which correspond to the (un-nomalized) jump rates between states, and $-\lambda_{ii}=\sum_{i\neq j}\lambda_{ij}$ is the rate of the exponential sojourn time at state $i\in E$.

Throughout the rest of the paper, if $u$ is an analytic function and $\mat{A}$ is a matrix, we define
\begin{align*}
	u( \mat{A})=\dfrac{1}{2 \pi i} \oint_{\gamma}u(w) (w \mat{I} -\mat{A} )^{-1}dw \,,
\end{align*}
where $\gamma$ is a simple path enclosing the eigenvalues of $\mat{A}$ and $\mat{I}$ is the identity matrix of the same dimension; cf.\ \cite[Sec. 3.4]{Bladt2017} for details. In particular, the matrix exponential can be defined in this way, and is equivalent to the more standard definition in terms of its series representation
\begin{align*}
\exp(\mat{A})=\sum_{i=1}^{n}\frac{\mat{A}^n}{n!}.
\end{align*}

Equation \eqref{Markov_transition_mat} allows for a very transparent treatment of the process $(Z_t)$ and related functionals. We now provide an important example. If the state space consists of $p+1$ points, where the last state is an absorbing one, and all other states are transient, then the intensity matrix simplifies in the following manner

\begin{align}\label{ph_structure}
	\mat{\Lambda}= \left( \begin{array}{cc}
		\mat{T} &  \vect{t} \\
		\vect{0} & 0
	\end{array} \right)\,.
\end{align}
where $\mat{T} $ is a $p \times p$ sub-intensity matrix and $\vect{t}$ is a $p$--dimensional column vector satisfying $\vect{t} =- \mat{T} \, \vect{e}$, where $\vect{e} $ is the $p$--dimensional column vector of ones. Let $ \pi_{k} = \Prob(Z_0 = k)$, $k = 1,\dots, p$, $\vect{\pi} = (\pi_1 ,\dots,\pi_p )$ be a probability vector corresponding to the initial distribution of the process $(Z_t)$. Notice that $\Prob(Z_0 = p + 1) = 0$. Then we say that the time until absorption 
\begin{align*}
	\tau_Z = \inf \{ t \geq  0 \mid Z_t = p+1 \}
\end{align*}
is a phase--type distribution with parameters $\vect{\pi}$ and $\mat{T}$, and simply write

$$\tau \sim  \mbox{PH}(\vect{\pi} , \mat{T} ).$$

The asymptotic behaviour in the tail of a phase--type distribution depends on the eigenvalues of $\mat{T}$, and is precisely given by
\begin{align*}
\Prob(\tau_Z>y)=\sum_{j=1}^{m} \sum_{l=0}^{\kappa_{j}-1}y^{l} \mathrm{e}^{\Re\left(-\eta_{j}\right) y }\left[a_{j l} \sin \left(\Im\left(\eta_{j}\right) y \right)+b_{j l} \cos \left(\Im\left(\eta_{j}\right) y \right)\right], 
\end{align*}
where $\eta_j$ are the eigenvalues of the Jordan blocks $\mat{J}_{j}$ of $\mat{T}$, with corresponding dimensions $\kappa_j$, $j = 1,\dots, m$, and $a_{jl}$ and $b_{jl}$ are constants depending on $\vect{\pi}$ and $\mat{T}$. If $\eta$ is the largest real eigenvalue of $\mat{T}$ and $n$ is the dimension of the Jordan block of $\eta$ it follows that
\begin{align}\label{PHtail_asymptotic}
	\Prob(\tau_Z>y) \sim c y^{n -1} e^{\eta y} \,, \quad y \to \infty \,,
\end{align}
where $c$ is a positive constant. In other words, every phase--type distribution has an exponential tail, with second order behaviour akin to the Erlang distribution. A comprehensive and modern treatment of phase--type distributions is found Bladt \& Nielsen \cite{Bladt2017}.

Thus, any life-insurance model based on this classical construction will necessarily have asymptotically exponential mortality rates, which in real-life datasets if often not fulfilled. Concerning the body of the distribution, phase--type distributions are dense in the sense of weak convergence on the positive real line, so for lower quantiles they are well-suited for modelling any possible time-of-death distribution. 

Notice, however, that the denseness property requires arbitrary growth of the dimension $p$, which conflicts with the interpretational aspect of the process $(Z_t)$ as evolving through different policy-holder statuses. A possible resolution to this inconvenience is to consider $p$ \textit{blocks} of states, each of arbitrary dimension. The interpretation being that within each block the policyholder has one status, but the time spent with that status evolves through \textit{hidden states}.

\section{The Inhomogeneous Markov Model}

In this section we consider a process $(X_t)$ in the state space $$E^\ast=(\vect{e}_1,\dots, \vect{e}_{p}, \dagger)$$ where the transient states $\vect{e}_k$ are the $k$-th unit basis vectors in $\mathbb{R}^p$, and $\dagger$ is an absorbing state. The process $(X_t)$ is defined in terms of $(Z_t)$ and an inhomogeneity function, i.e. an increasing function $g:\mathbb{R}_+\to \mathbb{R}_+$ with $g(0)=0$, as follows
\begin{align*}
X_t=(1\{Z_{g^{-1}(t)}=1\},\dots, 1\{Z_{g^{-1}(t)} = p\}), \quad \mbox{if}\quad Z_{g^{-1}(t)}\neq p+1,\quad t\ge0,
\end{align*}
and $X_t=\dagger$ otherwise. Loosely speaking, $X_t$ evolves through $E^\ast$ in the same manner as $Z_{g^{-1}(t)}$ evolves through $E$. Observe that $X_0\sim \vect{\pi}$ as well. In other words, the process $(X_t)$ is a deterministicly time-changed version of $(Z_t)$, with time being slowed down or sped up according to $g^{-1}$. For policyholders, this means that the times that they spend in each state is no longer exponential, and depends on the specific form of $g$. This gives rise to more parsimonious models where the block augmentation procedure outlined at the end of the previous section can be greatly reduced and in some cases entirely eliminated. 

The absorption time of $X_t$ can then be expressed in terms of $\tau_Z$ as follows
\begin{align*}
	\tau = \inf \{ t \geq  0 \mid X_{t} = \dagger \}=\inf \{ g(t) \geq  0 \mid Z_{t} = p+1 \}=g(\tau_Z).
\end{align*}
The distribution of these absorption times were studied from a statistical perspective in \cite{Bladt2017} and further extended to parametric and multivariate settings in \cite{albrecher2020iphfit} and recently in a survival analysis setup in \cite{bladt2020inh}, where they have been referred to as inhomogeneous phase--type distributions and denoted by the following parametrization $$\tau\sim \mbox{IPH}(\vect{\pi},\mat{T},\lambda),$$ where
\begin{align*}
\lambda(t)=\frac{d}{dt}g^{-1}(t)
\end{align*}
corresponds to the instantaneous inhomogeneous intensity of $X_t$.

 In \cite{bladt2020inh}, covariate-specific $\tau_x$ has been successfully implemented for a variety of models, the estimation procedure requiring a generalized EM algorithm. In the context of life-insurance, the most natural covariate $x$ is age of the policyholder at inception of the insurance contract.

One important feature of the absorption times of inhomogeneous Markov processes is that their tails are no longer exponential. Indeed, the shape depends on the function $g$, which follows directly from the following result (the proof can be found in \cite{bladt2020inh})
\begin{proposition}
	Let $\tau$ be the absorption time of $X_t$. Then the hazard function $h$ and cumulative hazard function $H$ of $\tau$ satisfy, respectively,  
	\begin{align*}
		h(t) &\sim A \lambda(t) \,, \quad t \to \infty \,,\\
		H(t)& \sim B g^{-1}(t) \,, \quad t \to \infty \,,
	\end{align*}
	where $A,B$ are positive constants.
\end{proposition}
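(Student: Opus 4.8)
The plan is to reduce the statement to the known tail behaviour of the underlying phase--type variable $\tau_Z$ via the deterministic time change $\tau=g(\tau_Z)$. First I would rewrite the survival function of $\tau$ as
$\bar F_\tau(t)=\Prob(g(\tau_Z)>t)=\Prob(\tau_Z>g^{-1}(t))=\bar F_{\tau_Z}(g^{-1}(t))$,
which is valid because $g$, hence $g^{-1}$, is strictly increasing; since $g:\mathbb{R}_+\to\mathbb{R}_+$ is increasing and (tacitly) unbounded, $g^{-1}(t)\to\infty$ as $t\to\infty$, so any asymptotics in $y$ for $\tau_Z$ transfer to asymptotics in $t$ for $\tau$ after the substitution $y=g^{-1}(t)$.

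Next I would record the relevant asymptotics for $\tau_Z\sim\mbox{PH}(\vect{\pi},\mat{T})$. By \eqref{PHtail_asymptotic}, $\bar F_{\tau_Z}(y)\sim c\,y^{n-1}e^{\eta y}$, with $\eta<0$ the largest real eigenvalue of $\mat{T}$ and $n$ the size of its Jordan block. Using the explicit representations $\bar F_{\tau_Z}(y)=\vect{\pi}\,e^{\mat{T}y}\,\vect{e}$ and $f_{\tau_Z}(y)=\vect{\pi}\,e^{\mat{T}y}\,\vect{t}$ and a Jordan decomposition of $\mat{T}$ (exactly the computation behind \eqref{PHtail_asymptotic}, differentiated term by term), one gets $f_{\tau_Z}(y)\sim -\eta\,c\,y^{n-1}e^{\eta y}$, the leading contribution coming from the dominant part $\eta\,y^{n-1}e^{\eta y}$ of the derivative of $y^{n-1}e^{\eta y}$. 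Hence the hazard rate of $\tau_Z$ satisfies $h_{\tau_Z}(y)=f_{\tau_Z}(y)/\bar F_{\tau_Z}(y)\to-\eta$, while its cumulative hazard satisfies $H_{\tau_Z}(y)=-\log\bar F_{\tau_Z}(y)=-\eta y-(n-1)\log y-\log c+o(1)\sim-\eta y$, the linear term dominating the logarithmic one. This identifies the constants: one may take $A=B=-\eta>0$.

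Then I would compose. For the cumulative hazard, $H(t)=-\log\bar F_\tau(t)=-\log\bar F_{\tau_Z}(g^{-1}(t))=H_{\tau_Z}(g^{-1}(t))\sim-\eta\,g^{-1}(t)=B\,g^{-1}(t)$ as $t\to\infty$, using $g^{-1}(t)\to\infty$. For the hazard, differentiating $\bar F_\tau(t)=\bar F_{\tau_Z}(g^{-1}(t))$ by the chain rule gives $f_\tau(t)=f_{\tau_Z}(g^{-1}(t))\,\tfrac{d}{dt}g^{-1}(t)=\lambda(t)\,f_{\tau_Z}(g^{-1}(t))$, where $\lambda(t)=\tfrac{d}{dt}g^{-1}(t)$ as in the $\mbox{IPH}$ setup, whence $h(t)=f_\tau(t)/\bar F_\tau(t)=\lambda(t)\,h_{\tau_Z}(g^{-1}(t))$. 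Since $h_{\tau_Z}(g^{-1}(t))\to-\eta=A$ and $\lambda(t)>0$, this yields $h(t)\sim A\,\lambda(t)$.

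The main obstacle, everything else being bookkeeping with the time change, is the density asymptotic $f_{\tau_Z}(y)\sim -\eta\,c\,y^{n-1}e^{\eta y}$, equivalently that the phase--type hazard rate converges to $-\eta$. The clean route is a Jordan-block analysis of $e^{\mat{T}y}$ parallel to the derivation of \eqref{PHtail_asymptotic}; alternatively one differentiates the tail asymptotic, which is legitimate here because $\bar F_{\tau_Z}$ is real-analytic with explicit matrix-exponential derivative, but this shortcut still needs a short justification. A minor point worth stating explicitly is the standing assumption that $g$ is unbounded, so that $g^{-1}(t)\to\infty$; otherwise $\tau$ is almost surely bounded and the asymptotic statement is vacuous.
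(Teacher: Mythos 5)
Your proof is correct and takes essentially the approach of the reference \cite{bladt2020inh} to which the paper defers this proposition: transfer the phase--type asymptotics through the time change $\tau=g(\tau_Z)$, using that the PH hazard rate converges to $-\eta$ (the negative of the dominant real eigenvalue of $\mat{T}$), so that $A=B=-\eta$. The points you flag — justifying differentiation of the tail asymptotic via the exact Jordan-form expansion of $\vect{\pi}e^{\mat{T}y}\vect{e}$, and the standing assumption that $g$ is unbounded so $g^{-1}(t)\to\infty$ — are precisely the right ones and are handled correctly.
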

Thus, more truthful asymptotic behaviour can be specified for the mortality rate of (possibly covariate-dependent) policyholders with the appropriate choice of parametric family for $g$.

\subsection{{\color{black}The reserve at time $t$}}
The reserve or provision is a fundamental quantity of interest related life-insurance contracts. Let $(\mathcal{F}_t)_{t\ge0}$ denote the natural filtration of the process $(X_t)$, and introduce the counting process 
\begin{align*}
N^{ij}_t=\#\{s \in(0, t], X(s-) =\vect{e}_i, X(s)=\vect{e}_j\},
\end{align*}
which by definition of $\mat{\Lambda}$ satisfies
\begin{align*}
\E(dN^{ij}_t|\mathcal{F}_{t-})=1\{X_{t-}=\vect{e}_i\}\lambda(t)\lambda_{ij}dt.
\end{align*}
If we assume that premiums are collected at a rate specified by the column vector $\vect{c}=(c_1,\dots,c_p)$, annuities are paid according to the rate column vector  $\vect{a}=(a_1,\dots,a_p)$, and benefits are {\color{black}paid} in lump sums at transition times according to a matrix $\mat{B}=(b_{ij})_{i,j=1,\dots,p}$, with $b_{ii}=0,\:\forall i$, then the reserve {\color{black}at time t with an expiry time n }is given by
\begin{align*}
Y_t=\E\left[ \int_t^n e^{-\int_t^sr}dB_s\Big |\mathcal{F}_t\right],
\end{align*}
where $r$ denotes the interest rate or a related quantity ($\int_a^b r=\int_a^b r(s)ds$), $n$ is the maximal duration of the contract, and
$$d B_t=\sum_{i=1}^p 1\{X_{t}=\vect{e}_i\} (a_i-c_i)dt+\sum_{i\neq j} b_{ij} d N^{ij}_t$$
is the process of total benefits less premiums. An obvious adaptation of the present setup can be made for when a stream of lump sums is, conditionally on the state, independently paid out. For a Poisson process, the term corresponding to its expected value can be absorbed into the vector $\vect{a}$, and thus we focus presently on the simplified case. In the remainder of the paper we will use the shorthand notation $\E_t(\cdot)=\E(\cdot|\mathcal{F}_t)$. The usual assumption is that $Y_0=0$, which in vector notation amounts to 
\begin{align*}
\E_0\left[ \int_0^ne^{-\int_0^ur}X_u (\vect{c}-\vect{a})du \right]=\sum_{i\neq j}\E_0\left[ \int_0^ne^{-\int_0^u r}1\{X_u = \vect{e}_i\}b_{ij}dN^{ij}_u \right].
\end{align*}
At the time of signing, the above equation guarantees that on average the premiums will cover the losses associated to the contract. However, at time $t>0$, $Y_t$ may be different from zero, and this is seen as a liability from the insurer's perspective. In vector notation, we obtain

\begin{align*}
Y_t=\E_t\left[ \int_t^ne^{-\int_t^u r)}X_u (\vect{a}-\vect{c})du \right]+\sum_{i\neq j}\E_t\left[ \int_t^ne^{-\int_t^u r}1\{X_u = \vect{e}_i\}b_{ij}dN^{ij}_u \right].
\end{align*}

With the use of functional calculus, we are able to obtain an alternate form of the reserve for a general inhomogeneity $g$ function. For two matrices $\mat{M},\mat{N}$ of the same dimension, we define $\mat{O}=\mat{M}\cdot\mat{N}$ by $(o_{ij})=(m_{ij}n_{ij})$, i.e. the result of the component-wise product of the two matrices.

\begin{proposition}\label{main_prop}
The reserve at time $t$ {\color{black} with expiry date $n$ }is given by
\begin{align*}
Y_t=X_t \left\{f_1(\mat{\Lambda})(\vect{a}-\vect{c})+f_2(\mat{\Lambda})(\mat{\Lambda}\cdot\mat{B})\vect{e}\right\}
\end{align*}
where
\begin{align}
f_1(w)&=\int_t^ne^{-\int_t^u r+ w(g^{-1}(u)-g^{-1}(t))}du,\label{f11}\\
f_2(w)&=\int_t^ne^{-\int_t^u r+ w(g^{-1}(u)-g^{-1}(t))}\lambda(u)du.\label{f22}
\end{align}
\end{proposition}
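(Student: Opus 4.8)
The plan is to condition on the current state, push the conditional expectation through the deterministic time-integration, and then recognise the resulting matrix integrals as the contour-integral definitions of $f_1(\mat{\Lambda})$ and $f_2(\mat{\Lambda})$; the annuity/premium summand and the lump-sum summand of $Y_t$ will be handled separately. The structural fact to use first is that, since $g$ is deterministic and strictly increasing, $(X_t)$ is a time-inhomogeneous Markov process, namely the deterministic clock change $t\mapsto g^{-1}(t)$ applied to $(Z_t)$; hence $\E_t(\,\cdot\,)=\E(\,\cdot\mid X_t)$ and, by \eqref{Markov_transition_mat},
\begin{align*}
\E_t[X_u]=X_t\,\exp\!\big(\mat{\Lambda}\,(g^{-1}(u)-g^{-1}(t))\big),\qquad u\ge t,
\end{align*}
where $X_u$ is read as the row vector of transient-state indicators (the zero vector on $\{X_u=\dagger\}$), so that only the transient block of $\mat{\Lambda}$ is seen.

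For the annuity/premium term I would invoke Fubini — all integrands are bounded on the compact interval $[t,n]$ — to interchange $\E_t$ with $\int_t^n$ and pull out $X_t$, reducing that term to $X_t\big(\int_t^n e^{-\int_t^u r}\exp(\mat{\Lambda}(g^{-1}(u)-g^{-1}(t)))\,du\big)(\vect{a}-\vect{c})$. Substituting $\exp(\mat{\Lambda}s)=\tfrac1{2\pi i}\oint_\gamma e^{ws}(w\mat{I}-\mat{\Lambda})^{-1}\,dw$ and interchanging $\int_t^n du$ with the integral over the fixed compact contour $\gamma$ (legitimate by joint continuity of the integrand) identifies the bracket as $\tfrac1{2\pi i}\oint_\gamma f_1(w)(w\mat{I}-\mat{\Lambda})^{-1}\,dw=f_1(\mat{\Lambda})$, using \eqref{f11}.

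The lump-sum term is where the real work lies. Its contribution is $\sum_{i\ne j}\E_t[\int_t^n e^{-\int_t^u r}b_{ij}\,dN^{ij}_u]$ (the indicator in the displayed reserve is to be evaluated at $u-$, where it is automatically $1$). Writing, per the stated compensator, $N^{ij}_u=\int_0^u 1\{X_{v-}=\vect{e}_i\}\lambda(v)\lambda_{ij}\,dv+M^{ij}_u$ with $M^{ij}$ an $(\mathcal{F}_u)$-martingale, and noting that $e^{-\int_t^u r}b_{ij}$ is bounded and predictable, $\int_t^{\cdot}e^{-\int_t^u r}b_{ij}\,dM^{ij}_u$ is a martingale on $[t,n]$ vanishing at $t$ and so drops out of $\E_t$. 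Then, using $X_{u-}=X_u$ for Lebesgue-a.e.\ $u$ and Fubini once more,
\begin{align*}
\sum_{i\ne j}\E_t\!\left[\int_t^n e^{-\int_t^u r}b_{ij}\,dN^{ij}_u\right]=\sum_{i\ne j}b_{ij}\lambda_{ij}\int_t^n e^{-\int_t^u r}\lambda(u)\,\E_t[1\{X_u=\vect{e}_i\}]\,du.
\end{align*}
Substituting $\E_t[1\{X_u=\vect{e}_i\}]=\big(X_t\exp(\mat{\Lambda}(g^{-1}(u)-g^{-1}(t)))\big)_i$ and using $b_{ii}=0$ to recognise $\sum_j\lambda_{ij}b_{ij}=\big((\mat{\Lambda}\cdot\mat{B})\vect{e}\big)_i$, the sum over $i$ collapses to $X_t\big(\int_t^n e^{-\int_t^u r}\lambda(u)\exp(\mat{\Lambda}(g^{-1}(u)-g^{-1}(t)))\,du\big)(\mat{\Lambda}\cdot\mat{B})\vect{e}$, which is $X_t\,f_2(\mat{\Lambda})\,(\mat{\Lambda}\cdot\mat{B})\vect{e}$ by the same contour exchange and \eqref{f22}. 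Adding the two pieces and factoring out $X_t$ gives the asserted formula.

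I expect the only genuinely delicate step to be the martingale argument for the lump-sum term — confirming that the stochastic integral against the compensated counting process has vanishing $\mathcal{F}_t$-conditional mean, where boundedness on $[t,n]$ removes any need for localisation — together with the routine but necessary check that time-integration commutes with the functional-calculus contour integral.
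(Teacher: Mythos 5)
Your proposal is correct and follows essentially the same route as the paper: Fubini plus the Markov property of the time-changed chain for the annuity/premium term, the compensator identity $\E(dN^{ij}_u\mid\mathcal{F}_{u-})=1\{X_{u-}=\vect{e}_i\}\lambda(u)\lambda_{ij}\,du$ for the lump-sum term, recognition of $\sum_j\lambda_{ij}b_{ij}$ as the $i$-th entry of $(\mat{\Lambda}\cdot\mat{B})\vect{e}$, and the exchange of the time integral with the Cauchy contour integral to identify $f_1(\mat{\Lambda})$ and $f_2(\mat{\Lambda})$. The only difference is that you make the martingale decomposition of $N^{ij}$ explicit where the paper passes directly to $\int_t^n e^{-\int_t^u r}p_{ki}(t,u)b_{ij}\lambda(u)\lambda_{ij}\,du$; this is a welcome clarification rather than a departure.
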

\begin{proof}
Observe that the term
\begin{align*}
\E_t\left[ \int_t^ne^{-\int_t^u r}X_u du \right]&= \int_t^ne^{-\int_t^u r}\E_t\left[X_u\right]du\\
&=\int_t^ne^{-\int_t^u r}X_t\mat{P}(t,u)du.
\end{align*}
But it is straightforward to see that for an appropriate path $\gamma$, {\color{black} from \eqref{Markov_transition_mat},}
\begin{align*}
\mat{P}(t,u)&=\exp(\mat{\Lambda}(g^{-1}(u)-g^{-1}(t)))\\
&=\dfrac{1}{2 \pi i} \oint_{\gamma}\exp(w) (w \mat{I} -\mat{\Lambda}(g^{-1}(u)-g^{-1}(t)) )^{-1}dw\\
&=\dfrac{1}{2 \pi i} \oint_{\gamma}\exp(w(g^{-1}(u)-g^{-1}(t))) (w \mat{I} -\mat{\Lambda} )^{-1}dw,
\end{align*}
from which
\begin{align*}
\E_t&\left[ \int_t^ne^{-\int_t^u r}X_u du \right]\\
&=\int_t^ne^{-\int_t^u r}X_t\dfrac{1}{2 \pi i} \oint_{\gamma}\exp(w(g^{-1}(u)-g^{-1}(t))) (w \mat{I} -\mat{\Lambda} )^{-1}dwdu\\
&=\dfrac{1}{2 \pi i} \oint_{\gamma}\int_t^ne^{-\int_t^u r}X_t\exp(w(g^{-1}(u)-g^{-1}(t))) du(w \mat{I} -\mat{\Lambda} )^{-1}dw\\
&=\dfrac{1}{2 \pi i} \oint_{\gamma}X_tf_1(w) du(w \mat{I} -\mat{\Lambda} )^{-1}dw,
\end{align*}
and the first part of the expression follows. Now, conditionally on the event $X_t=\vect{e}_k$, we have that
$$
\E_t\left[ \int_t^ne^{-r(u-t)}1\{X_u = \vect{e}_i\}b_{ij}dN^{ij}_u \right]=\int_t^n e^{-r(u-t)}\,p_{ki}(t,u) b_{ij}\lambda(u)\lambda_{ij}du,$$
from which
\begin{align*}
&\sum_{i\neq j}\E_t\left[ \int_t^ne^{-\int_t^u r}1\{X_u = \vect{e}_i\}b_{ij}dN^{ij}_u \right]\\
&=\int_t^n e^{-\int_t^u r}X_t\exp(\mat{\Lambda}(g^{-1}(u)-g^{-1}(t))) \lambda(u)(\mat{\Lambda}\cdot\mat{B})\vect{e}du\\
&=\dfrac{1}{2 \pi i} \oint_{\gamma}\int_t^ne^{-\int_t^u r}X_t\exp(w(g^{-1}(u)-g^{-1}(t))) \lambda(u)du(w \mat{I} -\mat{\Lambda} )^{-1}dw \,(\mat{\Lambda}\cdot\mat{B})\vect{e}\\
&=\dfrac{1}{2 \pi i} \oint_{\gamma}X_t f_2(w)du(w \mat{I} -\mat{\Lambda} )^{-1}dw \,(\mat{\Lambda}\cdot\mat{B})\vect{e},
\end{align*}
which yields the second term of the expression, and consequently completes the proof.

\end{proof}
The above expression for the reserve dissects the interplay between the inhomogeneity function $g$ and the annuities, benefits and premiums in an intuitive manner. The continuous nature of annuity payments and premium collection allows them to be directly comparable, even in the inhomogeneous case. However, this is not the case for benefits paid out when transitioning between states. 

For instance, if the intensity function $\lambda(t)$ has a spike between $t$ and $n$ which would otherwise not have been there in a homogeneous model, the term $$f_2(\mat{\Lambda})(\mat{\Lambda}\cdot \mat{B})\vect{e}$$ increases. This reflects that there will be an unusual amount of benefits being paid out due to a cluster of jumps. Hence, to maintain $Y_0=0$, $\vect{c}$ must grow in at least one coordinate. More generally, in an inhomogeneous Markov model, the lengthening or shortening of sojourn times across time has to be accounted for in the premium if and only if lump sum benefits are paid at state transitions. Also note that the premium calculation will in general  depend on $r$.

In the special case of $g(x)=x$, the functions $f_1$ and $f_2$ are both equal to the same exponential function, and we recover the homogeneous Markov model. A possible solution to the requirement $Y_0=0$ is then given by the fair premium
\begin{align*}
\vect{c}=\vect{a}+(\mat{\Lambda}\cdot\mat{B})\vect{e},
\end{align*}
which is independent of $f_1=f_2$, and in particular of $r$. 
However, contracts usually reserve the right to collect premiums only on specific states. For instance, when a policyholder is in a state corresponding to unemployment, illness or disability, premiums will seldom be collected. Under such restrictions, the premium collection will in general also depend on $r$.

\subsection{{\color{black}The reserve at time $0$}}
For convenience, we will assume that $E=\{1,\dots, p+1\}$ with the first $p$ states being transient and $p+1$ being an absorbing state and that $r$ is a constant. We specify the structure of the benefit matrix by
\begin{align*}
	\left( \begin{array}{cc}
		\mat{B} &  \vect{b} \\
		\vect{0} & 0
	\end{array} \right)\,,
\end{align*}
with the vector $\vect{b}$ denoting the benefits paid out at death from each one of the distinct states. 
Let us examine the functions $f_1$ and $f_2$ {\color{black} from \eqref{f11} and \eqref{f22}} more carefully at $t=0$, i.e. at the time of inception of the insurance contract. We may rewrite them by a change of variable in the following manner
\begin{align*}
f_1(-w)&=r^{-1}\int_{0}^{g^{-1}(n)}e^{-w u}\left\{re^{-rg(u)} \frac{dg}{du}(u)\right\}du\\
f_2(-w)&=w^{-1}\int_{0}^{n}e^{-r u}\left\{we^{-wg^{-1}(u)} \frac{dg^{-1}}{du}(u)\right\}du,
\end{align*}
from which a striking similarity is apparent, which we will refer to as \textit{duality}. Since both $g$ and $g^{-1}$ are increasing functions starting at zero, the functions $$e^{-rg(u)}, \quad e^{-wg^{-1}(u)}\quad u\ge0,$$
are survival functions, and hence 
\begin{align*}
h_1^{(r)}(u)&=re^{-rg(u)} \frac{dg}{du}(u),\\
h_2^{(w)}(u)&=we^{-wg^{-1}(u)} \frac{dg^{-1}}{du}(u)
\end{align*}
are probability density functions. Consequently, $f_1(-w)$ and $f_2(-w)$ for $w>0$ are both incomplete Laplace integrals. Taking  $n\to \infty$, i.e. allowing for indefinite payments until (possible) absorption of $(X_t)$, typically corresponding to the decease of the policyholder, we obtain the following formulas
\begin{align}\label{laplaces}
f_1(-w)&=r^{-1}\mathcal{L}_{h_1^{(r)}}(w),\nonumber\\
f_2(-w)&=w^{-1}\mathcal{L}_{h_2^{(w)}}(r),
\end{align}
where $\mathcal{L}_h$ denotes the Laplace transform of the function $h$.

As an immediate consequence, we have the following expression for the reserve at time zero of a contract with no expiry date.
\begin{corollary}\label{cor3}
The reserve at time $t=0$ for a contract with no expiry date is given by
\begin{align*}
Y_0=X_0 \left\{r^{-1}\mathcal{L}_{h_1^{(r)}}(-\mat{T})(\vect{a}-\vect{c})+(-\mat{T})^{-1}\mathcal{L}_{h_2^{(-\mat{T})}}(r)[(\mat{T}\cdot\mat{B})\vect{e}+\vect{t}\cdot\vect{b}]\right\}
\end{align*}
\end{corollary}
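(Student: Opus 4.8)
\emph{Proof strategy.} The plan is to specialise Proposition~\ref{main_prop} to $t=0$, let $n\to\infty$, and then combine the block-triangular structure of $\mat{\Lambda}$ with the change-of-variable identities recorded in \eqref{laplaces}. First I would set $t=0$ in Proposition~\ref{main_prop}: since $g(0)=0$ forces $g^{-1}(0)=0$ and $r$ is constant, the functions in \eqref{f11}--\eqref{f22} become $f_1(w)=\int_0^n e^{-ru+wg^{-1}(u)}\,du$ and $f_2(w)=\int_0^n e^{-ru+wg^{-1}(u)}\lambda(u)\,du$, and with the $(p+1)\times(p+1)$ benefit matrix $\widetilde{\mat B}$ having top-left block $\mat{B}$, last column $(\vect{b}^\top,0)^\top$ and zero last row, the proposition gives
\begin{align*}
Y_0=X_0\Big\{f_1(\mat{\Lambda})(\vect{a}-\vect{c})+f_2(\mat{\Lambda})\big(\mat{\Lambda}\cdot\widetilde{\mat B}\big)\vect{e}\Big\}.
\end{align*}

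Next I would perform two reductions. A direct block computation shows $\big(\mat{\Lambda}\cdot\widetilde{\mat B}\big)\vect{e}=\big((\mat{T}\cdot\mat{B})\vect{e}+\vect{t}\cdot\vect{b},\,0\big)^\top$, which is supported on the transient states; the initial vector is $X_0\sim(\vect{\pi},0)$, likewise supported there since $\Prob(Z_0=p+1)=0$. Because $\mat{\Lambda}$ is block upper-triangular with top-left block $\mat{T}$, for any analytic $u$ the matrix $u(\mat{\Lambda})$ is again block upper-triangular with top-left block $u(\mat{T})$; hence $u(\mat{\Lambda})$ applied to a vector supported on the transient block coincides with $u(\mat{T})$ applied to its transient part. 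Applying this collapses the display to $X_0\{f_1(\mat{T})(\vect{a}-\vect{c})+f_2(\mat{T})[(\mat{T}\cdot\mat{B})\vect{e}+\vect{t}\cdot\vect{b}]\}$, where $X_0$ is now read as its $\vect{\pi}$-part; note this reduction is also what makes the $n=\infty$ limit well-posed, since it removes the zero eigenvalue of $\mat{\Lambda}$ from the picture.

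Finally I would pass to $n=\infty$ and invoke duality. As spelled out before \eqref{laplaces}, the substitutions $u=g(v)$ and $u\mapsto u$ turn $f_1(-w)$ and $f_2(-w)$ into incomplete Laplace integrals of $h_1^{(r)}$ and $h_2^{(w)}$; letting $n\to\infty$ gives $f_1(-w)=r^{-1}\mathcal{L}_{h_1^{(r)}}(w)$ and $f_2(-w)=w^{-1}\mathcal{L}_{h_2^{(w)}}(r)$, equivalently $f_1(w)=r^{-1}\mathcal{L}_{h_1^{(r)}}(-w)$ and $f_2(w)=(-w)^{-1}\mathcal{L}_{h_2^{(-w)}}(r)$. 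Since both sides of each identity are the same scalar analytic function, the Cauchy functional calculus yields $f_1(\mat{T})=r^{-1}\mathcal{L}_{h_1^{(r)}}(-\mat{T})$ and $f_2(\mat{T})=(-\mat{T})^{-1}\mathcal{L}_{h_2^{(-\mat{T})}}(r)$, and substituting these in the expression above gives the claim.

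The steps needing care are the analytic ones. Taking $n\to\infty$ inside the contour integral, and the change-of-variable identities, require the integrands to be absolutely integrable on $(0,\infty)$: this holds because $\mat{T}$, being a sub-intensity matrix, has eigenvalues with strictly negative real part, so any sufficiently small contour $\gamma$ around $\mathrm{spec}(\mat{T})$ lies in $\{\Re(w)<0\}$, and combined with $r>0$ and the monotone divergence of $g^{-1}$ the functions $e^{-ru+wg^{-1}(u)}$ and $e^{-ru+wg^{-1}(u)}\lambda(u)$ decay; this also justifies the Fubini interchange between $\oint_\gamma$ and $\int du$ already used in the proof of Proposition~\ref{main_prop}. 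The subtlest bookkeeping is the very last substitution: $\mathcal{L}_{h_2^{(w)}}(r)$ carries $w$ in the definition of the density $h_2^{(w)}$ itself, so ``evaluating it at $-\mat{T}$'' must be read as applying the functional calculus to the single analytic map $w\mapsto f_2(w)=(-w)^{-1}\mathcal{L}_{h_2^{(-w)}}(r)$, the Laplace-transform notation serving only as an interpretation of that map.
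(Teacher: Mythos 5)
Your proposal is correct and follows essentially the same route as the paper: specialize Proposition \ref{main_prop} at $t=0$ with $n\to\infty$, use the block-triangular (phase-type) structure of $\mat{\Lambda}$ to replace matrix functions of $\mat{\Lambda}$ by the corresponding functions of $\mat{T}$ acting on $[(\mat{T}\cdot\mat{B})\vect{e}+\vect{t}\cdot\vect{b}]$, and then invoke the Laplace-transform identities \eqref{laplaces}. The only cosmetic difference is that the paper carries out the reduction by writing the explicit block form of $\exp(\mat{\Lambda}s)$ and $\exp(\mat{\Lambda}s)\mat{\Lambda}$ and integrating, whereas you appeal to the general fact that the functional calculus preserves block upper-triangularity; your extra care about reading $\mathcal{L}_{h_2^{(-\mat{T})}}(r)$ as the functional calculus applied to the single map $w\mapsto(-w)^{-1}\mathcal{L}_{h_2^{(-w)}}(r)$ is a welcome clarification of a point the paper leaves implicit.
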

\begin{proof}
Note that no more premiums or annuities are paid when the absorbing state is reached. Consequently, the expression follows from Proposition \ref{main_prop}, the identities \eqref{laplaces}, and the fact that for $\mat{\Lambda}$ with structure as in \eqref{ph_structure} the matrix exponentials can be decomposed as:
\begin{align*}
	\exp(\mat{\Lambda}s)= \left( \begin{array}{cc}
		\exp(\mat{T}s) &  \vect{e}-\exp(\mat{T}s)\vect{e} \\
		\vect{0} & 1
	\end{array} \right)\,
\end{align*}
and by block matrix multiplication
\begin{align*}
	\exp(\mat{\Lambda}s)\mat{\Lambda}= \left( \begin{array}{cc}
		\exp(\mat{T}s)\mat{T} &  \exp(\mat{T}s)\vect{t} \\
		\vect{0} & 0
	\end{array} \right)\,.
\end{align*}
\end{proof}

\begin{remark}\normalfont
In the above corollary, $\mathcal{L}_{h_1^{(r)}}(-\mat{T})$ is well defined since the real part of the eigenvalues of $\mat{T}$ are negative. Moreover, $-\mat{T}$ is known to be invertible and $-\mat{T}^{-1}$ is known as the Green matrix.
\end{remark}

We now give several illustrations of the above result.
\begin{example}\normalfont
(Power transform/Matrix-Weibull) Consider the parametric function $g_\theta(x)=x^{1/\theta}$. Then
\begin{align*}
h_1^{(r)}(u)=\frac{r}{\theta}e^{-ru^{1/\theta}}u^{1/\theta-1}
\end{align*}
is a {\color{black}Weibull$(r, 1/\theta)$} density, and so
\begin{align*}
\mathcal{L}_{h_1^{(r)}}(-\mat{T})=\int_{0}^{\infty} e^{\mat{T}u} \frac{r}{\theta}e^{-ru^{1/\theta}}u^{1/\theta-1}d u.
\end{align*}
Similarly, $h_2^{(w)}(u)$ is a Weibull density with $(r,\, 1/\theta)$ replaced by $(w,\theta)$. Hence,
\begin{align*}
\mathcal{L}_{h_2^{(-\mat{T})}}(r)=-\int_{0}^{\infty} e^{-r u} \theta\mat{T}e^{\mat{T}u^{\theta}}u^{\theta-1}d u.
\end{align*}
\end{example}

\begin{example}\normalfont
(Matrix-Pareto) Consider the function $g_\beta(x)=\beta(e^{x}-1)$. Then
\begin{align*}
h_1^{(r)}(u)=r\beta e^{-r\beta(e^{u}-1)+ u}
\end{align*}
is a Gompertz density, and so some integration gives
\begin{align*}
\mathcal{L}_{h_1^{(r)}}(-\mat{T})=r\beta e^{r \beta}\mathtt{E}_{-\mat{T}}(r\beta),
\end{align*}
where
\begin{align*}
\mathtt{E}_a(t)=\int_1^\infty e^{-tv}v^{-a}dv.
\end{align*}
The dual distribution is given by
\begin{align*}
h_2^{(w)}(u)=we^{-w\log(\beta^{-1} u+1)}\frac{\beta^{-1}}{ \beta^{-1}u+1}=w\frac{\beta^{w}}{( u+\beta)^{w +1}} .
\end{align*}
and we recognize the shifted Pareto distribution. Further integration yields
\begin{align*}
\mathcal{L}_{h_2^{(-\mat{T})}}(r)=-\mat{T}e^{\beta r}{\beta^{-\mat{T}}}\mathtt{E}_{-\mat{T}+1}(r).
\end{align*}
Collecting terms gives
\begin{align*}
Y_0=X_0e^{r \beta}\left\{\beta \mathtt{E}_{-\mat{T}}(r\beta)(\vect{a}-\vect{c})+{\beta^{-\mat{T}}}\mathtt{E}_{-\mat{T}+1}(r)[(\mat{T}\cdot\mat{B})\vect{e}+\vect{t}\cdot\vect{b}]\right\}.
\end{align*}
The terms multiplying $(\vect{a}-\vect{c})$ and $(\mat{T}\cdot\mat{B})\vect{e}$ both diverge as $\beta\to\infty$, reflecting increasingly longer sojourn times at each state.
\end{example}

\begin{example}\normalfont \label{matrix-gompertz}
(Matrix-Gompertz)
Consider the function $g_{\color{black}\kappa}(u)={\color{black}\kappa}^{-1}\log({\color{black}\kappa} u +1)$. Then 
\begin{align*}
h_1^{(r)}(u)=r e^{-r{\color{black}\kappa}^{-1}\log({\color{black}\kappa} u +1)}\frac{1}{{\color{black}\kappa} u +1}=\frac{r}{({\color{black}\kappa} u +1)^{r{\color{black}\kappa}^{-1}+1}},
\end{align*}
which is a shifted Pareto density, and then
\begin{align*}
\mathcal{L}_{h_1^{(r)}}(-\mat{T})=re^{-\mat{T}{\color{black}\kappa}^{-1} }{{\color{black}\kappa}^{-r{\color{black}\kappa}^{-1}}}\mathtt{E}_{r{\color{black}\kappa}^{-1}+1}(-\mat{T}).
\end{align*}
The dual distribution is given by
\begin{align*}
h_2^{(w)}(u)=w e^{-w{\color{black}\kappa}^{-1}(e^{{\color{black}\kappa} u}-1)+ {\color{black}\kappa} u},\end{align*}
which is of Gompertz type, and thus
\begin{align*}
\mathcal{L}_{h_2^{(-\mat{T})}}(r)=-\mat{T}{\color{black}\kappa}^{-1} e^{-\mat{T}{\color{black}\kappa}^{-1}}\mathtt{E}_{r{\color{black}\kappa}^{-1}}(-\mat{T}{\color{black}\kappa}^{-1}).
\end{align*}
Hence, we obtain for the reserve
\begin{align*}
Y_0=X_0e^{-\mat{T}{\color{black}\kappa}^{-1}}\left\{ {{\color{black}\kappa}^{-r{\color{black}\kappa}^{-1}}}\mathtt{E}_{r{\color{black}\kappa}^{-1}+1}(-\mat{T})(\vect{a}-\vect{c})+ {\color{black}\kappa}^{-1} \mathtt{E}_{r{\color{black}\kappa}^{-1}}(-\mat{T}{\color{black}\kappa}^{-1})[(\mat{T}\cdot\mat{B})\vect{e}+\vect{t}\cdot\vect{b}]\right\}.
\end{align*}

This example can be considered as the dual of the previous example, since the $h_1^{(r)}$ and $h_2^{(w)}$ functions are interchanged. Thus, the Gompertz and Pareto distributions can be seen as playing antipodal roles in the calculation of the reserve.

The Gompertz distribution, and its matrix generalization, have been empirically demonstrated to be suitable models for mortality modeling. This makes $g_{\color{black}\kappa}$ a strong candidate for the inhomogeneity transform from a statistical point of view in life insurance. 

\end{example}

\begin{example}\normalfont
(Constant annuities less premiums) Consider the reserve at any time $t$, possibly with a fixed expiry at time $n$, and arbitrary $g$. If each element of the vector $\vect{a}-\vect{c}$ is equal to the same constant value $k$ then, {\color{black} from Proposition \ref{main_prop},}
\begin{align*}
X_t f_1(\mat{\Lambda})(\vect{a}-\vect{c})&=k X_tf_1(\mat{T})\vect{e}\\
&=k X_tf_1(\mat{T})\vect{e}\\
&=k\int_t^ne^{-r(u-t)}\left[X_te^{\mat{T}(g^{-1}(u)-g^{-1}(t))}\vect{e}\right]du.
\end{align*}
The term in square brackets is computationally straightforward to evaluate without needing to resort to functional calculus of integrals, and hence $Y_t$ can be efficiently obtained. Moreover, for $t=0$, one may write the above expression as
\begin{align*}
k\int_t^ne^{-r(u-t)}\overline F_X(u)du
\end{align*}
where $\overline F$ is the tail of a random variable $$X\sim\mbox{IPH}(X_t,\mat{T},\lambda).$$
\end{example}

\subsection{Numerical evaluation of matrix functions}
The reserve associated to an insurance contract on an inhomogeneous Markov process with $p$ transient states and one absorbing one is readily seen from the above developments to be given for $t\in[0,\infty)$, and $n\in(t,\infty]$, by
\begin{align*}
Y_t=X_t \left\{f_1(\mat{T})(\vect{a}-\vect{c})+f_2(\mat{T})[(\mat{T}\cdot\mat{B})\vect{e}+\vect{t}\cdot\vect{b}]\right\},
\end{align*}
where $\mat{T}$ is as in \eqref{ph_structure}.

 The evaluation of the terms $f_1(\mat{T})$ and $f_2(\mat{T})$ can be found explicitly for special cases, as was seen in above. However, in full generality, a method for their numerical evaluation is required. This subsection discusses one possible strategy.

 Assume that $\mat{T}$ has a Jordan normal form given by $$\mat{T}=\mat{P}\,\mat{D}\,\mat{P}^{-1}$$ where   $\mat{D}= \mbox{diag}(\mat{J}_1,...,\mat{J}_r)$ and
\[  \mat{J}_i = 
\begin{pmatrix}
  \eta_i & 1 & 0 & \cdots & 0 \\
  0 & \eta_i & 1 & \cdots & 0 \\
  0 & 0 & \eta_i & \cdots & 0 \\
  \vdots & \vdots & \vdots & \vdots\vdots\vdots & \vdots \\
  0 & 0 & 0 & \cdots & \eta_i
\end{pmatrix} ,
  \]
  with the total dimension of the blocks adding up to $p$. Then we may express $f_k(\mat{T})$, $k =1,2$ by
  \begin{align}\label{jorddec_f}
     f_k(\mat{T})=\mat{P}\,\mbox{diag}(f_k(\mat{J}_1),...,f_k(\mat{J}_r)) \,\mat{P}^{-1}, 
     \end{align}
  where
  \begin{align} \label{jorddec_f2} f_k(\mat{J}_i) =  
\begin{pmatrix}
f_k(\eta_i) & f^{(1)}_k(\eta_i) & \frac{f_k^{(2)}(\eta_i)}{2!} & \cdots & \frac{f_k^{(\kappa_i-1)}(\eta_i)}{(\kappa_i-1)!} \\
0 & f_k(\eta_i) & f_k^{(1)}(\eta_i) & \cdots & \frac{f_k^{(\kappa_i-2)}(\eta_i)}{(\kappa_i-2)!} \\
0 & 0 & f_k(\eta_i) & \cdots & \frac{f_k^{(\kappa_i-3)}(\eta_i)}{(\kappa_i-3)!} \\
\vdots & \vdots & \cdots & \vdots\vdots\vdots & \vdots \\
0 & 0 & 0 & \cdots & f_k(\eta_i)
\end{pmatrix},
   \end{align}
 with {\color{black}
 \begin{align*}
 f_1^{(m)}(w)=\frac{d^mf_1}{dw^m}(w)=\int_t^n\{g^{-1}(u)-g^{-1}(t)\}^me^{-r(u-t)+w (g^{-1}(u)-g^{-1}(t))}du\\
  f_2^{(m)}(w)=\frac{d^mf_2}{dw^m}(w)=\int_t^n\{g^{-1}(u)-g^{-1}(t)\}^me^{-r(u-t)+w (g^{-1}(u)-g^{-1}(t))}\lambda(u)du,
 \end{align*}}
and  $\kappa_i$ the dimension of $\mat{J}_i$.  Consequently, we require only to evaluate $f_k$ and its derivatives at the different eigenvalues of $\mat{T}$.

A special and important case is when all the eigenvalues are real and distinct, which can be verified for phase--type distributions such as Erlang, Coxian, or Generalized Coxian, the latter being dense on the set of all distributions on the positive real line. In that case, \eqref{jorddec_f} and \eqref{jorddec_f2} reduce to

\begin{align*}
f_1&(\mat{T})=\mat{P}\, 
  \mbox{diag}\left( \int_t^ne^{-r(u-t)+\eta_i (g^{-1}(u)-g^{-1}(t))}du, \: i=1,\dots,p \right) \mat{P}^{-1},\\
  f_2&(\mat{T})=\mat{P}\, 
  \mbox{diag}\left( \int_t^ne^{-r(u-t)+\eta_i (g^{-1}(u)-g^{-1}(t))}\lambda(u)du, \: i=1,\dots,p \right) \mat{P}^{-1}.
\end{align*}

\section{{\color{black}The time-fractional inhomogeneous Markov model}}

In this section we show how many of the above results can be partially or fully generalized to the case of a non-Markovian stochastic process that satisfies Kolmogorov's equation in the fractional derivative sense.

We hence consider the state-space $E=\{1,\dots, p\}$ and a jump process $(Z_t)$ evolving in $E$, such that its jump intensities and sojourn times are as in the Markov case, governed by $\mat{\Lambda}$. However the main difference here is that the sojourn distribution is no longer exponentially distributed, but rather Mittag-Leffler distributed, i.e. having tail function
\begin{align*}
\overline{F}_{\xi}(x)=E_\alpha(\lambda_{ii}x^{\alpha}),\quad x>0, \quad \alpha \in(0,1],
\end{align*}
where 
\begin{align}
E_{\alpha,\beta}(z)=\sum_{k=0}^\infty \frac{z^k}{\Gamma(\alpha k+\beta)},\quad z\in\mathbb{C},\quad  \beta\in\mathbb{R},\quad \alpha>0\label{mtglf_fn}
\end{align}
is the Mittag-Leffler function, and for convenience we write $E_\alpha=E_{\alpha,1}$. The Mittag-Leffler function is entire for $\beta>0$. Its matrix version can equivalently be defined in terms of an analogous series or in terms of the Cauchy integral formula.

It may be shown (cf. \cite{mml}) that, starting at zero, the transition matrix of the jump process $(Z_t)$ satisfies 
\begin{align}\label{eq:generatormmlat0}
\boldsymbol{P}(t)=E_{\alpha, 1}\left(\mat{\Lambda} t^{\alpha}\right),
\end{align}
which is the solution to the fractional differential equation  
$${ }_{0}^{c} D_{t}^{\alpha} \boldsymbol{P}(t)=\mat{\Lambda} \boldsymbol{P}(t)=\boldsymbol{P}(t) \mat{\Lambda},$$
in terms of the Caputo derivative
$${ }_{0}^{c} D_{t}^{\alpha} x(t)=\frac{1}{\Gamma(n-\alpha)} \int_{0}^{t}(t-\tau)^{n-\alpha-1} x^{(n)}(\tau) {d} \tau.$$

For the case of $\mat{\Lambda}$ having the structure \eqref{ph_structure}, i.e. with an absorbing state, then for an initial distribution $\vect{\pi}$ we get that
\begin{align*}
	\tau_Z = \inf \{ t \geq  0 \mid Z_t = p+1 \} \sim  \mbox{PH}_\alpha(\vect{\pi} , \mat{T} )
\end{align*}
is a fractional phase--type distribution with parameters $\vect{\pi}$ and $\mat{T}$. The latter distribution is precisely a phase--type distribution when $\alpha=1$, whereas for $\alpha<1$ its right tail is regularly varying with parameter $\alpha$, and hence possesses no mean. 

For a more in-depth analysis of the transition probabilities, it is necessary to regard $(Z_t)$ as a time-changed process. Thus, we introduce the $\alpha$-stable subordinator $(U_t)$ defined as a Levy process with $\E(e^{-sU_t})=\exp(-ts^\alpha)$, and its inverse $(S_t)$ defined in terms of the hitting times
\begin{align*}
S_t=\inf\{s>0\,:\,t\le U_s\},
\end{align*}
such that $\Prob(S_t\le s)=\Prob(t\le U_s)$. It follows easily that
\begin{align*}
(Z_t)\stackrel{d}{=}(Z^\circ_{S_t})
\end{align*}
where $(Z^\circ)$ is the Markov process associated to $(\vect{\pi},\mat{T})$. 
It is also straightforward to see that 
\begin{align*}
\tau_Z\stackrel{d}{=} U_{\tau_{Z^{\circ}}}\stackrel{d}{=}\tau_{Z^{\circ}}^{1/\alpha}\,U_1,
\end{align*}
as noted in \cite{mml}. For notational convenience, we will write 
\begin{align*}
X^\circ_t=(1\{Z^\circ_{t}=1\},\dots, 1\{Z^\circ_{t} = p\}), \quad \mbox{if}\quad Z^\circ_{t}\neq p+1,\quad t\ge0,
\end{align*}
and $X^\circ_t=\dagger$ otherwise.

In \cite{donatien} the following conditional transition probabilities of $(Z_t)$ were derived, generalizing \eqref{eq:generatormmlat0}.
\begin{align}
\mat{P}_{u,v}(t,x)={E}_\alpha(\mat{T}(x-u)^\alpha)1\{x\in[u,\infty)\}+\mat{I}1\{x\in[t,u)\},\end{align}
with $$[\mat{P}_{u,v}(t,x)]_{ij}=\Prob(Z_x=j|Z_t=i, U_{S_t}=u, S_t=v).$$

In \cite{donatien} the reserve of a multi-state insurance model based on $(Z_t)$ was considered. We now proceed to extend those results by considering an underlying deterministic time transform, as we did above for the Markov case. The resulting time-transformed model is of considerable interest since the absorption times may have finite mean (or higher moments) for $\alpha<1$, contrary to the homogeneous case. {\color{black} An intuitive description is that the sub-exponential behaviour (of the maximum observation behaving roughly as the sum) is cast into a more amenable environment through the inhomogeneity function.}

\subsection{A motivating example: LTC recipients}
{{\color{black}
The following example provides a starting point for the statistical analysis of fractional inhomogeneous multi-state models for life insurance modelling. We aim to motivate rather than being exhaustive, and in particular we omit time-dependent specifications, which would be analogous to the Lee-Carter model (\cite{lee1992modeling}). The latter can be straightforwardly extended from the current settings, but their statistical analysis is not straightforward, and the derivation of an EM algorithm for such extensions is out of the scope of the current paper.}

The quantification of abnormally large events and in particular of long lifetimes is a difficult task, since by definition these events are rare. In the recent article \cite{rootzen2017human}, and subsequent contributions, it is shown that individuals close to becoming super-centenarians have a mortality rate which becomes constant (exponential tail) and independent across covariates such as sex and region. Improvement in medicine and the general increasing trends on human lifetimes further suggest that this longevity effect may evolve in the future. This is, however, no obvious statistical fact, since most mortality tables gather people above $98$ in a single age category, say as $99+$. 

We consider the recent data provided in \cite{ltc_aus} for the Austrian population which are recipients of public welfare long-term-care (LTC) benefits in 2018. We focus only on the total count of people receiving well-fare benefits. We consider ages above $59$ and do not segment according to frailty level, that is, we will only study the marginal distribution of the sampled population. This data suffers from the usual binning, and collects all lifetimes above $98$ in a $99+$ category. We make the following imputation to the data based on the recent findings of exponential tail-behaviour of large human lifetimes: we remove the $99+$ counts and add a smooth continuation of the observed counts from ages $99$ to $108$ (ten years) which decrease exponentially (see Figure \ref{fig:imputed}). Even if the total counts above $98$ years are very low, we now show how an underlying fractional chain can help capture this longevity risk feature. 

\begin{figure}[!htbp]
\centering
\includegraphics[width=0.7\textwidth]{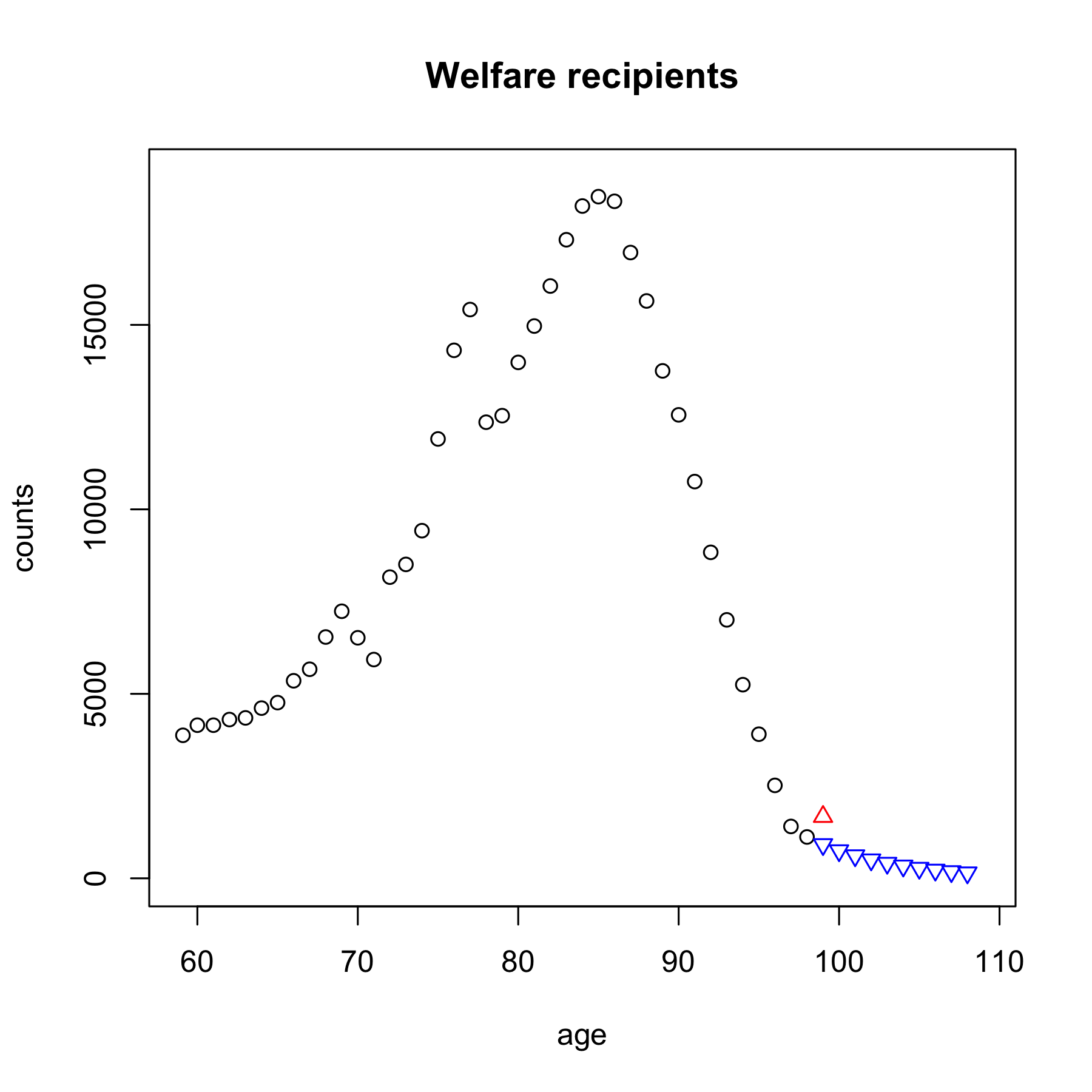}
\caption{Well-fare recipients of LTC in Austria in 2018. Circles indicate actual datapoints, the upward-pointing triangle is the total counts binned at the $99+$ category, and the downward-pointing triangles are ten years of imputed exponentially-decaying counts.} \label{fig:imputed}
\end{figure}

We fitted a three-dimensional Matrix-Gompertz distribution (as defined in Example \ref{matrix-gompertz}) and its time-fractional counterpart (anticipating Definition \ref{iph_fract} below), both having a Coxian (bi-diagonal) matrix structure. The selection of dimension and structure were taken to be as the best-fitting ones for the former distribution. The resulting density functions of the two fitted distributions are given in Figure \ref{survcurve}. Together with the reported AIC scores, it suggests that the underlying sub-exponential behaviour of the Mittag-Leffler holding times are helping with the modeling of the exponentially-decaying tail, without compromising the body of the distribution, contrary to the usual Matrix-Gompertz distribution, which does make this compromise. The rest of the fitted quantiles agree very closely between both models, since the Gompertz distribution is nonetheless a good distribution for describing the bulk of ages of human lifetimes. In the left-hand tail there is a slight miss-fit from the fractional model. This is because the stable behaviour of the Mittag-Leffler function always affects both tails of the distribution. The fractional value $\alpha=0.956$ is highly significant, as assessed by standard techniques such as a likelihood ratio test, or the variance covariance matrix derived from the inverse of the hessian (which yields a $95\%$ confidence interval of $(0.954,\, 0.958)$).

{\color{black} In general, Matrix-Gompertz distributions (and not just the simple Gompertz-Makeham law, which by now is somewhat academic) are already very accurate models for the description of human lifetimes. There is evidence that matrix models provide sturdy competition and are much more interpretable against classical lifetime models, such as the Gompertz-Makeham, or even against the Lee-Carter model when considering time dependence specifications (a deeper analysis is still required and a formal estimation procedure is still under development). Consequently, Figure 2 is remarkable since it shows the potential of fractional models in the statistical domain.}

\begin{figure}[!htbp]
\centering
\includegraphics[width=0.85\textwidth]{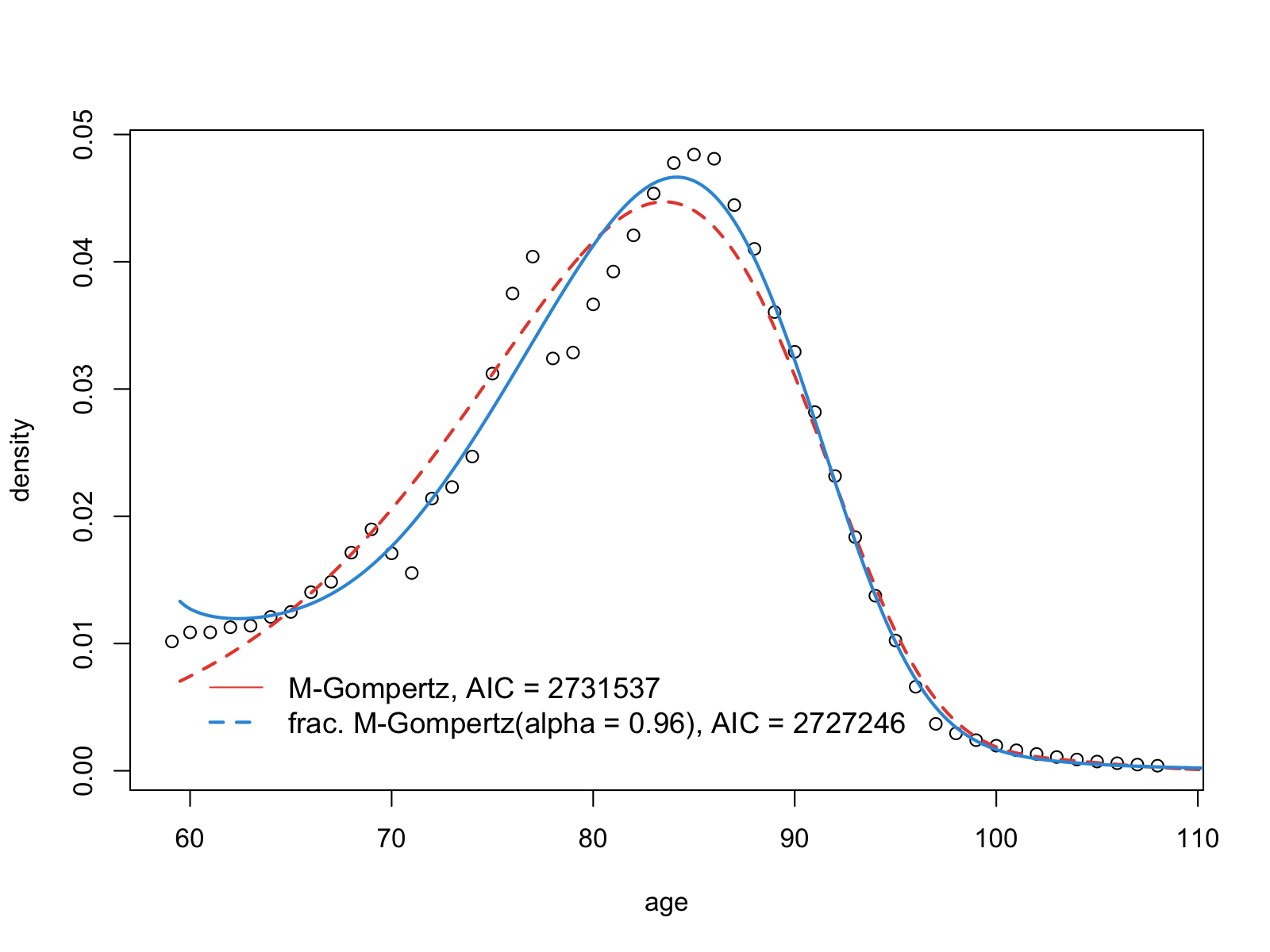}
\caption{Fitted density functions for Matrix-Gompertz distribution: time-fractional vs. standard versions.} \label{survcurve}
\end{figure}

The aim of this short analysis is not to give any sharp conclusions on the data itself, but rather to showcase how under reasonable assumptions, fractional models such as inhomogeneous matrix Mittag-Leffler (fractional phase-type) distributions can help explain certain features of data, thus making them attractive models to consider for the more theoretical calculations that follow. All statistical fitting was done using maximum-likelihood estimation, with the EM algorithm being employed for the non-fractional distribution.
}

\subsection{The inhomogeneous case}
As above, we consider a process $(X_t)$ in the state space $$E^\ast=(\vect{e}_1,\dots, \vect{e}_{p}, \dagger)$$ defined in terms of $(Z_t)$ and an inhomogeneity function $g$ by
\begin{align*}
X_t=(1\{Z_{g^{-1}(t)}=1\},\dots, 1\{Z_{g^{-1}(t)} = p\}), \quad \mbox{if}\quad Z_{g^{-1}(t)}\neq p+1,\quad t\ge0,
\end{align*}
and $X_t=\dagger$ otherwise. 

\begin{definition}\label{iph_fract}
Let $\tau=\inf\{t\ge0\,|\, X_t=\dagger\}$ be the absorption time of $X$. We say that $\tau$ is fractional inhomogeneous phase--type distributed and write $$\tau\sim \mbox{IPH}_\alpha(\vect{\pi},\mat{T},g).$$
\end{definition}

For a power transform, the absorption time of $X_t$ in terms of $\tau_Z$
\begin{align*}
	\tau =(\tau_Z)^\gamma ,\quad \gamma>0,
\end{align*}
was considered in \cite{mml} as the class of power Matrix Mittag-Leffler functions. Subsequently, \cite{mmml,mfph} extended this class to the multivariate setting. The advantage of these models from a non-life insurance point of view is that the tails are regularly varying with any possible tail index, and thus some of its members have finite moments. Presently, for life insurance, appropriate choices of $g$ will dictate the asymptotic behaviour of $\tau$ in terms of that of $\tau_Z$ in the same way that it was treated for the Markov case. 

We now present some basic properties of $\mbox{IPH}_\alpha$ distributions.

\begin{proposition}
Let $\tau\sim \mbox{IPH}_\alpha(\vect{\pi},\mat{T},g)$, then
\begin{enumerate}
\setlength{\itemsep}{5pt}
\item[(i)] $\tau\stackrel{d}{=} g(\tau_Z)$, where $\tau_Z\sim \mbox{PH}_\alpha(\vect{\pi},\mat{T})$.
\item[(ii)] $F_\tau(x)=1-\vect{\pi}E_\alpha(\mat{T}(g^{-1}(x))^{\alpha})\vect{e}.$
\item[(iii)] $f_\tau(x)=(g^{-1}(x))^{\alpha -1}\vect{\pi}E_{\alpha,\alpha}(\mat{T}(g^{-1}(x))^{\alpha})\vect{t}\lambda(x).$
\item[(iv)] $\E[h(\tau)]=\vect{\pi}\mathcal{L}^{(\alpha,\alpha)}_{h\circ g}(-\mat{T})\vect{t},$ where $$\mathcal{L}^{(\alpha,\alpha)}_{f}(u)=\int_0^\infty f(x) x^{\alpha-1}E_{\alpha,\alpha}(-ux^{\alpha})dx,$$ whenever it exists.
\end{enumerate}
\end{proposition}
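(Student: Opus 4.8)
The four assertions are of increasing sophistication, and I would treat (i) as the structural backbone from which (ii)--(iv) follow by unwinding definitions. For (i), I would simply note that $X_t$ is, by construction, the indicator-vector description of $Z_{g^{-1}(t)}$, so $X_t = \dagger$ precisely when $Z_{g^{-1}(t)}=p+1$; taking the infimum over such $t$ and using that $g$ is increasing and bijective on $\mathbb{R}_+$ with $g(0)=0$ gives $\tau = \inf\{t\ge 0: g^{-1}(t)\ge \tau_Z\} = g(\tau_Z)$, where $\tau_Z\sim\mbox{PH}_\alpha(\vect{\pi},\mat{T})$ by the identification recalled just before Definition \ref{iph_fract}. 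This is essentially the same computation as the one carried out for the Markov case in Section 3, now with $(Z_t)$ being the Mittag-Leffler sojourn process.

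For (ii), I would write $F_\tau(x) = \Prob(g(\tau_Z)\le x) = \Prob(\tau_Z \le g^{-1}(x))$ using monotonicity of $g$, and then substitute the survival function of $\tau_Z\sim\mbox{PH}_\alpha(\vect{\pi},\mat{T})$. That survival function is read off from \eqref{eq:generatormmlat0} applied to the sub-intensity structure \eqref{ph_structure}: $\Prob(\tau_Z>y) = \vect{\pi}\,E_\alpha(\mat{T}y^\alpha)\,\vect{e}$, because the probability of still being in a transient state at time $y$ is $\vect{\pi}$ times the transient block of $E_{\alpha,1}(\mat{\Lambda}y^\alpha)$ times $\vect{e}$, and the transient block is exactly $E_\alpha(\mat{T}y^\alpha)$. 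Setting $y=g^{-1}(x)$ yields the stated formula. For (iii), I would differentiate (ii) in $x$: the chain rule produces a factor $\tfrac{d}{dx}g^{-1}(x)=\lambda(x)$, and differentiating $E_\alpha(\mat{T}s^\alpha)$ with $s=g^{-1}(x)$ requires the identity $\tfrac{d}{ds}E_\alpha(\mat{T}s^\alpha) = \mat{T}\,s^{\alpha-1}E_{\alpha,\alpha}(\mat{T}s^\alpha)$, which follows termwise from the series \eqref{mtglf_fn} (the $k$-th term $\mat{T}^k s^{\alpha k}/\Gamma(\alpha k+1)$ differentiates to $\mat{T}^k\alpha k\, s^{\alpha k-1}/\Gamma(\alpha k+1) = \mat{T}^k s^{\alpha k -1}/\Gamma(\alpha k)$, which after shifting the index and factoring $\mat{T}s^{\alpha-1}$ reassembles $E_{\alpha,\alpha}$). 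Combining with $\vect{t}=-\mat{T}\vect{e}$ gives $f_\tau(x) = (g^{-1}(x))^{\alpha-1}\vect{\pi}\,E_{\alpha,\alpha}(\mat{T}(g^{-1}(x))^\alpha)\,\vect{t}\,\lambda(x)$.

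For (iv), I would compute $\E[h(\tau)] = \int_0^\infty h(x) f_\tau(x)\,dx$ using (iii), then perform the change of variables $x = g(y)$, $dx = g'(y)\,dy$, noting $g^{-1}(x)=y$ and $\lambda(x)\,dx = \tfrac{d}{dx}g^{-1}(x)\cdot g'(y)\,dy = dy$. This collapses the inhomogeneity out of the integral, leaving $\int_0^\infty h(g(y))\, y^{\alpha-1}\vect{\pi}\,E_{\alpha,\alpha}(\mat{T}y^\alpha)\,\vect{t}\,dy$, which is exactly $\vect{\pi}\,\mathcal{L}^{(\alpha,\alpha)}_{h\circ g}(-\mat{T})\,\vect{t}$ once one inserts $-\mat{T}$ for $u$ in the definition of $\mathcal{L}^{(\alpha,\alpha)}$ and uses that $E_{\alpha,\alpha}(\mat{T}y^\alpha) = E_{\alpha,\alpha}(-(-\mat{T})y^\alpha)$. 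The main obstacle is not any single step but the justification that the termwise manipulations of the matrix Mittag-Leffler series (differentiation in (iii), and interchanging the integral with the matrix function in (iv)) are legitimate; this rests on the entireness of $E_{\alpha,\beta}$ for $\beta>0$ and the fact that the eigenvalues of $\mat{T}$ have strictly negative real part (so the relevant integrals converge absolutely, exactly as in the Remark following Corollary \ref{cor3}), together with the standing assumption ``whenever it exists'' that covers integrability of $h\circ g$ against the matrix kernel.
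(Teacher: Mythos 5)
Your proposal is correct and follows essentially the same route as the paper: (i) from the definition of the time change, (ii)--(iii) from (i) together with the distribution function of the fractional phase--type law and termwise differentiation of the Mittag--Leffler series, and (iv) via the law of the unconscious statistician with the substitution $y=g^{-1}(x)$, which is exactly the one-line computation the paper records. You simply supply more detail (the transient-block identification of $\Prob(\tau_Z>y)$, the derivative identity for $E_\alpha$, and the convergence remarks) than the paper's terse proof does.
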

\begin{proof}
The proof of {\color{black}(i) follows from the definition, (ii) and (iii) follow from (i), the definition of the Mittag-Leffler function, and the fact that $F_{\tau_Z}(x)=1-\vect{\pi}E_\alpha(\mat{T}x^{\alpha})\vect{e}$. Finally, (iv)} follows from 
\begin{align*}
\E[h(\tau)]=\int_0^\infty h(g(x)) x^{\alpha -1}\vect{\pi}E_{\alpha,\alpha}(\mat{T}x^{\alpha})\vect{t}dx=\vect{\pi}\int_0^\infty (h\circ g)(x)x^{\alpha -1}E_{\alpha,\alpha}(\mat{T}x^{\alpha})dx \,\vect{t}.
\end{align*}
\end{proof}

\subsection{{\color{black}The reserve at time $t$}}
We will now consider the conditional reserve as a means to obtain the reserve at time zero. The conditional reserve is defined conditionally in terms of the underlying subordinator and inverse subordinator and in certain situations might be of interest. However, from a practical perspective, it can be seen as an intermediate step towards efficient simulation of the reserve at time $t>0$. For $t=0$ we recover the unconditional reserve, since in that case $U_0=S_0=0$. We introduce the counting process 
\begin{align*}
N^{ij}_t=\#\{s \in(0, t], X(s-) =\vect{e}_i, X(s)=\vect{e}_j\}.
\end{align*}
The conditional reserve is defined by
\begin{align*}
Y_{t,u,v}=\E_{u,v}\left[ \int_t^n e^{-\int_t^s r}dB_s\Big |\mathcal{F}_t\right],
\end{align*}
where $\E_{t,u,v}(\cdot)=\E(\,\cdot \, | \mathcal{F}_t,U_{S_{g^{-1}(t)}}=u, S_{g^{-1}(t)}=v),$ and
$$d B_t=\sum_{i=1}^p 1\{X_{t}=\vect{e}_i\} (a_i-c_i)dt+\sum_{i\neq j} b_{ij} d N^{ij}_t$$
is the process of total benefits less premiums. For premium calculation, the equation $Y_0=0$ is easy to handle in this fractional setting, since then the conditional expectation simplifies, and thus the defining equation is given by
\begin{align*}
\E_0\left[ \int_0^ne^{-\int_0^u r}X_u (\vect{c}-\vect{a})du \right]=\sum_{i\neq j}\E_0\left[ \int_0^ne^{-\int_0^u r}1\{X_u = \vect{e}_i\}b_{ij}dN^{ij}_u \right].
\end{align*}

\begin{proposition}\label{main_prop2}
The conditional reserve at time $t$ {\color{black} with expiry date $n$ }is given by
\begin{align*}
Y_{t,u,v}=X^\circ_v \left\{\left[f^{(\alpha)}_1(\mat{\Lambda},u) + \int_t^{g(u)}e^{-\int_t^x r}dx \right](\vect{a}-\vect{c})+f^{(\alpha)}_2(\mat{\Lambda},u)(\mat{\Lambda}\cdot\mat{B})\vect{e}\right\}
\end{align*}
where
\begin{align*}
f^{(\alpha)}_1(w,u)&=\int_{g(u)}^ne^{-\int_t^x r}E_\alpha(w(g^{-1}(x)-u)^\alpha) dx,\\
f^{(\alpha)}_2(w,u)&=\int_{g(u)}^ne^{-\int_t^x r}(g^{-1}(x)-g^{-1}(t))^{\alpha-1}E_{\alpha,\alpha}(w(g^{-1}(x)-u)^\alpha)\lambda(x) dx,
\end{align*}
{\color{black}and $E_{\alpha,\alpha}$ is defined in \eqref{mtglf_fn}.}
\end{proposition}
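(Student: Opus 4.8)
The strategy is to mimic the computation in Proposition~\ref{main_prop}, but now conditioning additionally on the position of the subordinator and its inverse at the (deterministic) time $g^{-1}(t)$. The key structural fact, stated earlier in the excerpt, is the decomposition $(Z_x)\stackrel{d}{=}(Z^\circ_{S_x})$ together with the conditional transition probabilities
\begin{align*}
[\mat{P}_{u,v}(g^{-1}(t),x)]_{ij}=\Prob(Z_x=j\mid Z_{g^{-1}(t)}=i,\, U_{S_{g^{-1}(t)}}=u,\, S_{g^{-1}(t)}=v),
\end{align*}
which equal $E_\alpha(\mat{\Lambda}(x-u)^\alpha)$ for $x\ge u$ and the identity for $x\in[g^{-1}(t),u)$. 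The presence of $u\ge g^{-1}(t)$ reflects the overshoot of the stable subordinator: conditionally, the chain is \emph{frozen} on the time-interval that $g^{-1}(\cdot)$ maps into $[g^{-1}(t),u)$, i.e.\ on the physical interval $[t,g(u)]$, and only starts moving (in a Markovian, but $\alpha$-Mittag-Leffler-sojourn, fashion from the renewal point $u$) afterwards. This is exactly why the formula splits into a deterministic ``frozen'' annuity contribution $\int_t^{g(u)}e^{-\int_t^x r}\,dx\,(\vect a-\vect c)$ and the dynamic parts $f_1^{(\alpha)},f_2^{(\alpha)}$ whose integrals start at $g(u)$ rather than at $t$.

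\textbf{Main steps.} First I would write $Y_{t,u,v}=\E_{u,v}\!\left[\int_t^n e^{-\int_t^x r}X_x(\vect a-\vect c)\,dx\right]+\sum_{i\ne j}\E_{u,v}\!\left[\int_t^n e^{-\int_t^x r}1\{X_x=\vect e_i\}b_{ij}\,dN^{ij}_x\right]$ and treat the two terms separately. For the annuity term, split the outer integral at $x=g(u)$: on $[t,g(u)]$ we have $X_x=X^\circ_v$ (the chain has not moved), giving the deterministic summand; on $[g(u),n]$ we insert $\E_{u,v}[X_x]=X^\circ_v\,E_\alpha(\mat\Lambda(g^{-1}(x)-u)^\alpha)$, then represent $E_\alpha$ via the Cauchy/functional-calculus integral $\frac{1}{2\pi i}\oint_\gamma E_\alpha(w(g^{-1}(x)-u)^\alpha)(w\mat I-\mat\Lambda)^{-1}\,dw$, interchange the $dx$ and $dw$ integrals (justified by absolute convergence on the compact contour $\gamma$ and finiteness of $n$, or by monotone/dominated convergence if $n=\infty$), and recognize the inner $dx$-integral as $f_1^{(\alpha)}(w,u)$. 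Collapsing the contour integral back yields $X^\circ_v f_1^{(\alpha)}(\mat\Lambda,u)(\vect a-\vect c)$. For the benefit term, condition on $X^\circ_v=\vect e_k$ and use that the compensator of $N^{ij}$ is still governed by $\mat\Lambda$ modulated by the inhomogeneity intensity: for $x\ge g(u)$ the density of a jump $i\to j$ is $[\,\mat P_{u,v}(g^{-1}(t),x)\,]_{ki}\,\lambda_{ij}\,(g^{-1}(x))'\,$-type expression; more precisely, using that the $\alpha$-semi-Markov jump density from state $i$ carries the factor $(g^{-1}(x)-u)^{\alpha-1}E_{\alpha,\alpha}(\mat T(g^{-1}(x)-u)^\alpha)$ and the chain rule $dg^{-1}$, one assembles $\sum_{i\ne j}p_{ki}(\cdot)b_{ij}\lambda_{ij}\to (\mat\Lambda\cdot\mat B)\vect e$, and the remaining scalar integral over $[g(u),n]$ is exactly $f_2^{(\alpha)}(w,u)$ after the same contour manipulation. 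Summing the two contributions gives the claimed expression.

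\textbf{The delicate point.} The main obstacle is the benefit term: one must be careful about what ``intensity'' means for the semi-Markov (fractional) process $(X_x)$, since $(Z_x)$ is not Markov and $dN^{ij}_x$ does not have a simple $\mathcal F_{x-}$-compensator of the product form used in Section~3. The clean way around this is to work on the \emph{enlarged} conditioning $\sigma(\mathcal F_t, U_{S_{g^{-1}(t)}}=u, S_{g^{-1}(t)}=v)$, under which, past the renewal epoch $u$, the process restarts as a time-fractional Markov chain with generator $\mat\Lambda$; then the jump density of $Z$ at physical time $x>g(u)$ from state $i$ is the classical Mittag-Leffler density $(g^{-1}(x)-u)^{\alpha-1}E_{\alpha,\alpha}(\mat T(g^{-1}(x)-u)^\alpha)\vect t$ pushed forward by $g$, which supplies precisely the kernel appearing in $f_2^{(\alpha)}$. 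I would verify this reduction by appealing to the representation $(Z_x)\stackrel{d}{=}(Z^\circ_{S_x})$ and the known conditional law $\mat P_{u,v}$ from \cite{donatien}, so that no new Markov-renewal theory is needed. The remaining steps — Fubini on a compact contour, identifying the scalar integrals with $f_1^{(\alpha)}$ and $f_2^{(\alpha)}$, and the block decomposition $(\mat\Lambda\cdot\mat B)\vect e$ including the lump-sum-at-death vector $\vect t\cdot\vect b$ when one specializes to the absorbing structure \eqref{ph_structure} — are routine and parallel to the Markov case.
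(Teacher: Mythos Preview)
Your treatment of the annuity term is essentially identical to the paper's: both split the $dx$-integral at $g(u)$, use the conditional transition matrix $\mat{P}_{u,v}$ (frozen on $[t,g(u)]$, Mittag--Leffler on $[g(u),n]$), pass through the Cauchy contour, and swap integrals to land on $f_1^{(\alpha)}$ plus the frozen summand. No daylight there.

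The benefit term is where your route and the paper's diverge. You propose to invoke a ``semi-Markov jump density'' for the restarted fractional chain and then assemble $\sum_{i\ne j}p_{ki}(\cdot)\,b_{ij}\,\lambda_{ij}\to(\mat{\Lambda}\cdot\mat{B})\vect{e}$ as in the Markov case. The paper does \emph{not} postulate such a density; it warns explicitly that the Hadamard product $\mat{A}\cdot\mat{B}$ does not commute with ordinary matrix multiplication, so one cannot simply slot $\lambda_{ij}$ between $\mat{P}_{u,v}$ and $\mat{B}$. Instead the paper writes, via the subordinator representation and the tower property,
\[
\E_{t,u,v}\!\left[1\{X_x=\vect e_i\}b_{ij}\,dN^{ij}_x\right]
=\E_{t,u,v}\!\left[b_{ij}\,X_v^\circ\, e^{\mat{\Lambda}(S_{g^{-1}(x)}-S_{g^{-1}(t)})}\vect e_i\vect e_i^{\mathsf t}\,\lim_{\rho\downarrow0}\frac{e^{\mat{\Lambda}(S_{g^{-1}(x+\rho)}-S_{g^{-1}(x)})}-\mat{I}}{\rho}\,\vect e_j\right],
\]
sums over $i$, and then uses an $\mat{\Lambda}^{-1}$ insertion-and-cancellation trick to pull the $\rho$-limit outside the Hadamard structure. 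Only after this manipulation does the conditional expectation collapse to a derivative of $E_\alpha(\mat{\Lambda}(g^{-1}(x)-u)^\alpha)$, producing the $E_{\alpha,\alpha}$ kernel in $f_2^{(\alpha)}$. Your heuristic ``the jump density from state $i$ carries the factor $(g^{-1}(x)-u)^{\alpha-1}E_{\alpha,\alpha}(\cdot)$'' is the \emph{output} of that computation, not an input you can cite; the density you wrote down is actually the absorption-time density $\vect\pi\,(\cdot)\vect t$, not the instantaneous $i\to j$ jump rate given that the chain started at $k$. So your plan is correct in outline and correctly flags the delicate point, but to close the gap you would end up reproducing the paper's subordinator-limit computation (or an equivalent semi-Markov compensator argument) rather than bypassing it.
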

\begin{proof}
We first write
\begin{align*}
\E_{t,u,v}\left[ \int_t^ne^{-r\int_t^x r}X_x dx\right]&= \int_t^ne^{-\int_t^x r}\E_{t,u,v}\left[X_x\right]dx.
\end{align*}
But by functional calculus, for an appropriate path $\gamma$,
\begin{align*}
&\mat{P}_{u,v}(t,x)\\
&={E}_\alpha(\mat{\Lambda}(g^{-1}(x)-u)^\alpha)1\{x\in[g(u),\infty)\} +\mat{I}1\{x\in[t,g(u)]\}\\
&=\dfrac{1}{2 \pi i} \oint_{\gamma}{E}_\alpha(w)1\{x\in[g(u),\infty)\} (w \mat{I} -\mat{\Lambda}(g^{-1}(x)-u)^\alpha )^{-1}dw+\mat{I}1\{x\in[t,g(u)]\}\\
&=\dfrac{1}{2 \pi i} \oint_{\gamma}{E}_\alpha(w(g^{-1}(x)-u)^\alpha)1\{x\in[g(u),\infty)\}(w \mat{I} -\mat{\Lambda} )^{-1}dw+\mat{I}1\{x\in[t,g(u)]\},
\end{align*}
and hence
\begin{align*}
\E_{t,u,v}&\left[ \int_t^ne^{-\int_t^x r}X_x dx \right]\\
&=\int_{g(u)}^n e^{-\int_t^x r}X^\circ_v\dfrac{1}{2 \pi i} \oint_{\gamma}E_\alpha(w(g^{-1}(x)-u)^\alpha) (w \mat{I} -\mat{\Lambda} )^{-1}dwdx\\
&\quad+\int_t^{g(u)}e^{-\int_t^x r}X^\circ_v\mat{I}dx\\
&=\dfrac{1}{2 \pi i} \oint_{\gamma}\int_{g(u)}^ne^{-\int_t^x r}X^\circ_vE_\alpha(w(g^{-1}(x)-u)^\alpha) dx(w \mat{I} -\mat{\Lambda} )^{-1}dw \\
&\quad+ X^\circ_v \int_t^{g(u)}e^{-\int_t^x r}dx\\
&=\dfrac{1}{2 \pi i} \oint_{\gamma}X^\circ_v f^{(\alpha)}_1(w) (w \mat{I} -\mat{\Lambda} )^{-1}dw+ X^\circ_v \int_t^{g(u)}e^{-\int_t^x r}dx,
\end{align*}
giving first part of the expression. We have to be a bit cautious when considering the representation of the intensity of the counting process $N_{ij}$, since the operation $A\cdot B$ between matrices does not commute with regular matrix multiplication. Thus, observe that by the tower property of conditional expectations,
\begin{align*}
&\E_{t,u,v}\left[1\{X_x = \vect{e}_i\}b_{ij}dN^{ij}_x \right]\\
&=\E_{t,u,v}\left[b_{ij}X_v^\circ e^{\mat{\Lambda}(S_{g^{-1}(x)}-S_{g^{-1}(t)})}\vect{e}_i\vect{e}_i^t\lim_{\rho\downarrow 0}\frac{e^{\mat{\Lambda}(S_{g^{-1}(x+\rho)}-S_{g^{-1}(x)})}-\mat{I}}{\rho}\vect{e}_j\right].
\end{align*}
Summing over $i$ we get 
\begin{align*}
&\sum_{i}\E_{t,u,v}\left[1\{X_x = \vect{e}_i\}b_{ij}dN^{ij}_x \right]\\
&=\E_{t,u,v}\left[X_v^\circ e^{\mat{\Lambda}(S_{g^{-1}(x)}-S_{g^{-1}(t)})}\lim_{\rho\downarrow 0}\left\{\frac{e^{\mat{\Lambda}(S_{g^{-1}(x+\rho)}-S_{g^{-1}(x)})}-\mat{I}}{\rho}\cdot \mat{B}\right\}\vect{e}_j\right]\\
&=\E_{t,u,v}\left[X_v^\circ e^{\mat{\Lambda}(S_{g^{-1}(x)}-S_{g^{-1}(t)})} \lim_{\rho\downarrow 0}\frac{S_{g^{-1}(x+\rho)}-S_{g^{-1}(x)}}{\rho} (\mat{\Lambda}\cdot \mat{B})\vect{e}_j\right]\\
&=\E_{t,u,v}\left[X_v^\circ e^{\mat{\Lambda}(S_{g^{-1}(x)}-S_{g^{-1}(t)})} \lim_{\rho\downarrow 0}\left\{\mat{\Lambda}^{-1}\frac{e^{\mat{\Lambda}(S_{g^{-1}(x+\rho)}-S_{g^{-1}(x)})}-\mat{I}}{\rho}\right\} (\mat{\Lambda}\cdot \mat{B})\vect{e}_j\right]\\
&=X_v^\circ\E_{t,u,v}\left[  \lim_{\rho\downarrow 0}\left\{\mat{\Lambda}^{-1}\frac{e^{\mat{\Lambda}(S_{g^{-1}(x+\rho)}-S_{g^{-1}(t)})}-e^{\mat{\Lambda}(S_{g^{-1}(x)}-S_{g^{-1}(t)})}}{\rho}\right\} \right](\mat{\Lambda}\cdot \mat{B})\vect{e}_j\\
&=X_v^\circ\mat{\Lambda}^{-1}\left[  \lim_{\rho\downarrow 0}\left\{\frac{E_\alpha(\mat{\Lambda} (g^{-1}(x+\rho)-u)^\alpha)-E_\alpha(\mat{\Lambda} (g^{-1}(x)-u)^\alpha)}{\rho}\right\} \right](\mat{\Lambda}\cdot \mat{B})\vect{e}_j\\
&=X_v^\circ\mat{\Lambda}^{-1}\mat{\Lambda} (g^{-1}(x)-u)^{\alpha-1}E_{\alpha,\alpha}(\mat{\Lambda}(g^{-1}(x)-u)^\alpha\lambda(x) (\mat{\Lambda}\cdot \mat{B})\vect{e}_j\\
&=X_v^\circ (g^{-1}(x)-u)^{\alpha-1}E_{\alpha,\alpha}(\mat{\Lambda}(g^{-1}(x)-u)^\alpha\lambda(x) (\mat{\Lambda}\cdot \mat{B})\vect{e}_j.
\end{align*}
Observe that we have assumed invertibility of $\mat{\Lambda}$ for clarity. The general case follows from approximation, since that term anyway gets cancelled by $\mat{\Lambda}$ arising from the derivative of the matrix Mittag-Leffler function.

Consequently,
\begin{align*}
&\sum_{i\neq j}\E_{t,u,v}\left[ \int_t^ne^{-\int_t^x r}1\{X_x = \vect{e}_i\}b_{ij}dN^{ij}_x \right]\\
&=\int_{g(u)}^n e^{-\int_t^x r}X^\circ_v(g^{-1}(x)-u)^{\alpha-1}E_{\alpha,\alpha}(\mat{\Lambda}(g^{-1}(x)-u)^\alpha\lambda(x)(\mat{\Lambda} \cdot\mat{B})\vect{e}dx\\
&=\dfrac{1}{2 \pi i}\left[ \oint_{\gamma}\int_{g(u)}^ne^{-\int_t^x r}X^\circ_v(g^{-1}(x)-g^{-1}(t))^{\alpha-1}\right.\\
&\quad\times  E_{\alpha,\alpha}(w(g^{-1}(x)-u)^\alpha\lambda(x) dx(w \mat{I} -\mat{\Lambda} )^{-1}dw (\mat{\Lambda}\cdot\mat{B})\vect{e}\Big]\\
&=\dfrac{1}{2 \pi i} \oint_{\gamma}X^\circ_v f^{(\alpha)}_2(w)du(w \mat{I} -\mat{\Lambda} )^{-1}dw (\mat{\Lambda}\cdot\mat{B})\vect{e},
\end{align*}
which yields the second part of the expression and the proof is complete.
\end{proof}

\subsection{{\color{black}The reserve at time $0$}}
We again may obtain an interpretation in terms of duality when $p+1$ is an absorbing state and $r$ is a constant. The functions involved in the calculation of the reserve at inception of the contract may be re-written as
\begin{align*}
f^{(\alpha)}_1(-w,0)&=r^{-1}\int_{0}^{g^{-1}(n)}E_\alpha(-w u^\alpha)\left\{re^{-rg(u)} \frac{dg}{du}(u)\right\}du,\\
f_2^{(\alpha)}(-w,0)&=w^{-1}\int_{0}^{n}e^{-r u}\left\{w[g^{-1}(x)]^{\alpha-1}E_{\alpha,\alpha}(-w(g^{-1}(u))^\alpha) \frac{dg^{-1}}{du}(u)\right\}du.
\end{align*}
We do not recover the Laplace transform in one case, but we may define for a function $h$ its generalized Laplace transform as
\begin{align*}
\mathcal{L}^{(\alpha)}_h(u)=\int_0^\infty E_{\alpha}(-ux^\alpha)h(x)dx,\quad \alpha\in (0,1],
\end{align*}
such that $\mathcal{L}^{(1)}_h=\mathcal{L}_h$. Then, since both 
$$E_\alpha(-rg(u)),\quad E_\alpha(-w g^{-1}(u))$$
are survival functions with corresponding densities $h_1^{(r,\alpha)}(u)$ and $h_2^{(w,\alpha)}(u)$, we obtain
\begin{align}\label{laplaces2}
f^{(\alpha)}_1(-w,0)&=r^{-1}\mathcal{L}^{(\alpha)}_{h_1^{(r,1)}}(w),\nonumber\\
f^{(\alpha)}_2(-w,0)&=w^{-1}\mathcal{L}^{(1)}_{h_2^{(w,\alpha)}}(r).
\end{align}

We now immediately obtain an analogous result to Corollary \ref{cor3}, the proof of which is virtually identical and thus omitted.
\begin{corollary}
The reserve at time $t=0$ for a contract with no expiry date is given by
\begin{align*}
Y_{0,0,0}=X^\circ_0 \left\{r^{-1}\mathcal{L}^{(\alpha)}_{h_1^{(r,1)}}(-\mat{T})(\vect{a}-\vect{c})+(-\mat{T})^{-1}\mathcal{L}^{(1)}_{h_2^{(-\mat{T},\alpha)}}(r)[(\mat{T}\cdot\mat{B})\vect{e}+\vect{t}\cdot\vect{b}]\right\}.
\end{align*}
\end{corollary}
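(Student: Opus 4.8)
The plan is to mirror the proof of Corollary \ref{cor3}, substituting the time-fractional building blocks established in Proposition \ref{main_prop2} for their Markov analogues. Starting from the conditional reserve formula of Proposition \ref{main_prop2} evaluated at $t=0$, $u=0$, $v=0$ (so that $U_{S_0}=0$ and $S_0=0$, and $X^\circ_0\sim\vect{\pi}$), the term $\int_t^{g(u)}e^{-\int_t^x r}dx$ vanishes because $g(0)=0$, leaving $Y_{0,0,0}=X^\circ_0\{f^{(\alpha)}_1(\mat{\Lambda},0)(\vect{a}-\vect{c})+f^{(\alpha)}_2(\mat{\Lambda},0)(\mat{\Lambda}\cdot\mat{B})\vect{e}\}$ with the $n\to\infty$ limit understood in the two defining integrals.

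Next I would insert the identities \eqref{laplaces2}, which rewrite $f^{(\alpha)}_1(-w,0)$ and $f^{(\alpha)}_2(-w,0)$ (after the same change of variables as in the Markov case) as $r^{-1}\mathcal{L}^{(\alpha)}_{h_1^{(r,1)}}(w)$ and $w^{-1}\mathcal{L}^{(1)}_{h_2^{(w,\alpha)}}(r)$ respectively; one must check that $E_\alpha(-rg(u))$ and $E_\alpha(-wg^{-1}(u))$ are genuine survival functions, which holds since $E_\alpha$ is completely monotone on $(-\infty,0]$ for $\alpha\in(0,1]$, $g$ and $g^{-1}$ are increasing with value $0$ at $0$, and both tend to $\infty$ at $\infty$ (guaranteeing the densities $h_1^{(r,1)}$, $h_2^{(w,\alpha)}$ integrate to one). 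Then, exactly as in Corollary \ref{cor3}, I would specialise $\mat{\Lambda}$ to the sub-intensity structure \eqref{ph_structure} and use the block decompositions of $E_\alpha(\mat{\Lambda}s)$ and $E_\alpha(\mat{\Lambda}s)\mat{\Lambda}$ — the matrix Mittag-Leffler analogues of the exponential decompositions displayed in the proof of Corollary \ref{cor3} — to replace $f^{(\alpha)}_1(\mat{\Lambda},0)(\vect{a}-\vect{c})$ by $r^{-1}\mathcal{L}^{(\alpha)}_{h_1^{(r,1)}}(-\mat{T})(\vect{a}-\vect{c})$ and $f^{(\alpha)}_2(\mat{\Lambda},0)(\mat{\Lambda}\cdot\mat{B})\vect{e}$ by $(-\mat{T})^{-1}\mathcal{L}^{(1)}_{h_2^{(-\mat{T},\alpha)}}(r)[(\mat{T}\cdot\mat{B})\vect{e}+\vect{t}\cdot\vect{b}]$, where the $\vect{t}\cdot\vect{b}$ contribution arises precisely from the off-diagonal block $E_\alpha(\mat{T}s)\vect{t}$ carrying the lump-sum benefit $\vect{b}$ paid on entering the absorbing state.

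The only points requiring genuine care are the convergence of the improper integrals as $n\to\infty$ and the well-definedness of the matrix transforms. For $\mathcal{L}^{(\alpha)}_{h_1^{(r,1)}}(-\mat{T})$ the integrand behaves like $E_\alpha(\mat{T}u^\alpha)$ times a probability density, and since the eigenvalues of $\mat{T}$ have strictly negative real part, $E_\alpha(\mat{T}u^\alpha)$ decays (polynomially for $\alpha<1$) so the integral converges and, together with the finiteness of the first moment of $h_1^{(r,1)}$ when $g$ is chosen appropriately, the expression is finite; similarly $-\mat{T}$ is invertible (the Green matrix $(-\mat{T})^{-1}$ exists) and $\mathcal{L}^{(1)}_{h_2^{(-\mat{T},\alpha)}}(r)$ is an ordinary Laplace transform in $r>0$ of an integrable matrix-valued function. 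These are the same observations made in the Remark following Corollary \ref{cor3}, so the main (and essentially only) obstacle is bookkeeping: making sure the change of variables, the $n\to\infty$ passage, and the block-matrix substitution are carried out consistently — which is why, as the authors note, the proof is virtually identical to that of Corollary \ref{cor3} and can be omitted.
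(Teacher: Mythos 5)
Your proposal is correct and follows exactly the route the paper intends: evaluate Proposition \ref{main_prop2} at $t=u=v=0$ so the $\int_t^{g(u)}$ term vanishes, insert the identities \eqref{laplaces2}, and use the Mittag-Leffler analogues of the block decompositions from the proof of Corollary \ref{cor3} to pass from $\mat{\Lambda}$ to $(\mat{T},\vect{t})$ and produce the $\vect{t}\cdot\vect{b}$ term. The paper omits this proof precisely because it is ``virtually identical'' to that of Corollary \ref{cor3}, and your write-up supplies those identical steps faithfully (your appeal to first moments of $h_1^{(r,1)}$ is unnecessary --- boundedness of $E_\alpha(\mat{T}u^\alpha)$ already gives convergence --- but harmless).
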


\begin{remark}\rm
The numerical integration of the above expressions can be achieved by Jordan decomposition or diagonalization in an analogous manner to the non-fractional counterpart.
\end{remark}

\section{{\color{black}A fractional Matrix-Gompertz example}}
{\color{black} We provide an example related to a fractional Matrix-Gompertz distribution. {\color{black} The insurance contracts that we specify below are simple, and the emphasis is on the underlying fractional model. This specification paves the way for other types of models to be adapted and generalized. For instance, time dependence in the payment functions is a natural next step.}

Thus, consider the case of the inhomogeneity function $g^{-1}_\beta(x)=\beta^{-1}(\exp(\beta x)-1)$, which for $\alpha =1$ corresponds to a Markov jump process with Matrix-Gompertz absorption time, whereas in general for $\alpha<1$ we get a non-Markovian jump process with an absorption time given by $$\tau=\beta^{-1}\log(\beta \tau_Z +1),$$
where $\tau_Z\sim \mbox{PH}_\alpha(\vect{\pi},\mat{T})$. It follows that for $\alpha<1$ this random variable has exponentially decaying tails, so for $\alpha \in (0,1]$, $$\E[\tau^k]<\infty ,\quad\forall  k\ge0.$$ 
{\color{black} The latter property improves homogeneous fractional models, which are heavy-tailed and possess no moments, making them unrealistic when dealing with human lifetimes.}

Consider a Coxian underlying structure with $\beta =0.1383$ $\vect{\pi}= (1,\:0,\:0)$
and 
\begin{align*}
\mat{T}=\begin{pmatrix}
-0.1722 & 0.1585 & 0\\
0 & -0.5663 & 0.5664\\
0 & 0 & -0.0052
\end{pmatrix}.
\end{align*}

The chosen parameters are not arbitrary, since for $\alpha = 0.96$, they correspond to the fractional Matrix-Gompertz specification of Figure \ref{survcurve}.

This model corresponds to an individual which traverses four states, the fourth being an absorbing state representing death. The sojourn distribution becomes larger as the individual progresses through the states, which indicates a possible longevity risk. Moreover, for $\alpha<1$, this longevity risk is exacerbated by the sub-exponentially distributed sojourns of the underlying process $Z$.
In Figure \ref{fig:qqplot} we depict the (shifted) densities and survival curves of $\mbox{IPH}_{\alpha}(\vect{\pi},\mat{T},g_\beta)$ for $\alpha = 0.9,\:0.76,\:0.6,\:0.4$. We see that as $\alpha$ decreases, longevity grows. In fact, both tails are affected by the self-similar behaviour of the underlying stable random variable, but the left-hand side behaviour is not visible in a global scale (for this choice of parameters).


\begin{figure}[!htbp]
\centering
\includegraphics[width=1\textwidth]{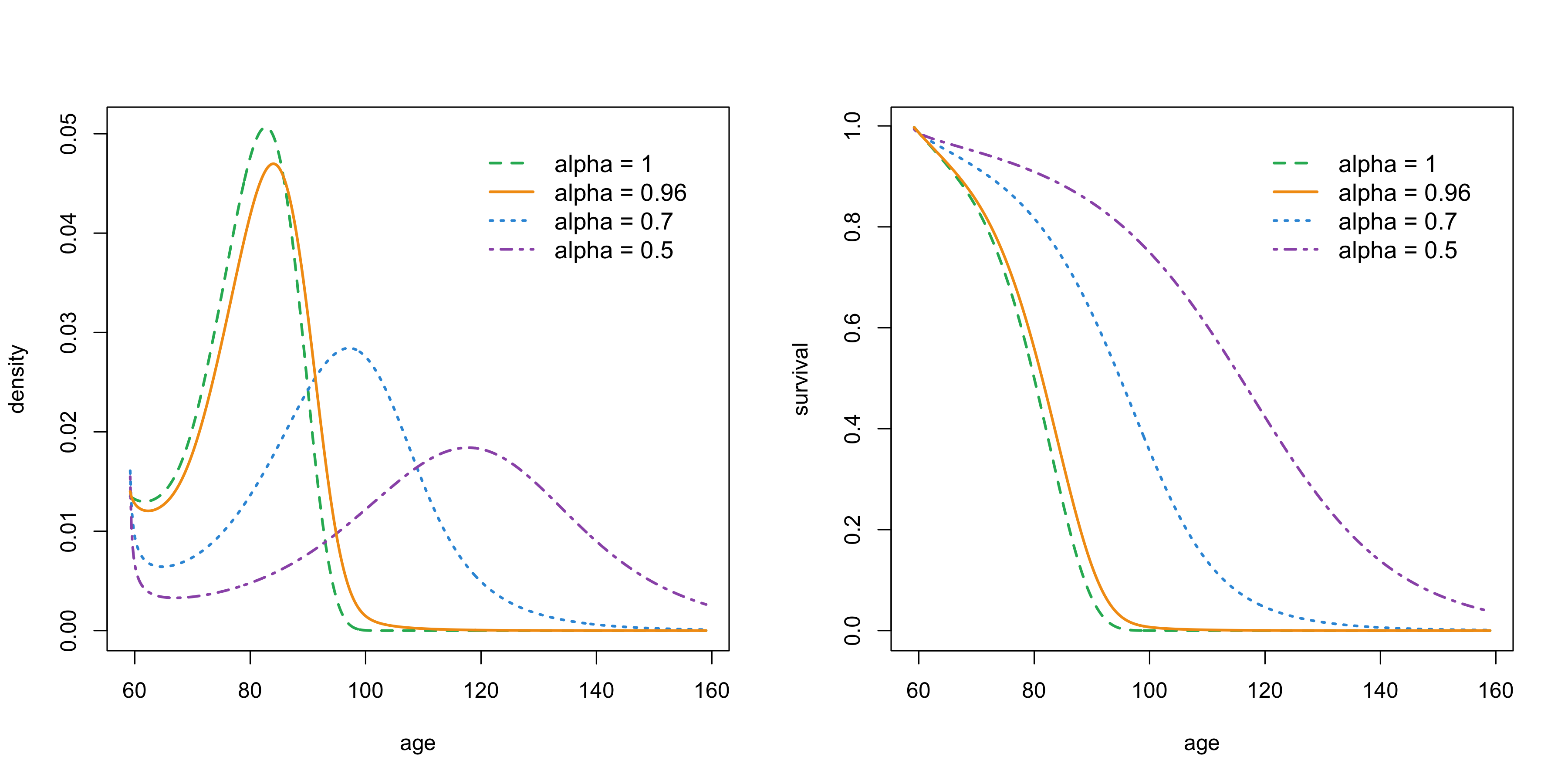}
\caption{{\color{black}Densities (left) and survival curves (right) of a fractional Gompertz IPH distribution, for varying fractional parameter $\alpha$.}} \label{fig:qqplot}
\end{figure}

We now consider two simple insurance contracts that an individual with remaining lifetime being modelled with a $\mbox{IPH}_{\alpha}(\vect{\pi},\mat{T},g_\beta)$ can purchase. More complex specifications are possible, but we presently focus on showcasing the underlying distribution and its effect on longevity.

C1. Lump sums of unit size are paid at each transition of state, including to the absorbing one, and annuities of unit rate are paid throughout the lifetime of the policyholder, or up to time $n$, whatever happens first.

C2. A lump sum of unit size is paid when transitioning from state $2$ to $3$, and a more considerable lump sum of size $50$ is paid out upon death, except if the possible contract expiry date runs out before. Annuities of rate $1/2$ are paid throughout the lifetime of the policyholder, or up to time $n$, whatever happens first.

Intuitively, C2 is seen as a larger liability for large $n$, since in that case there is a larger chance to pay out an enlarged lump sum. However, this intuition ignores longevity risk considerations, where annuities associated with C1 could become very costly in case of a longer stay in any given state.

The expected discounted liabilities (the reserve when setting premiums to zero) are shown in Figure \ref{fig:liabilities}, as a function of $n$, for different choices of $\alpha$. We observe that the longevity risk associated with smaller $\alpha$ is reflected in larger future losses for C1 when $n$ is large. When considering premium calculation through the reserve equation $Y_0=0$, it is thus crucial to correctly capture this longevity risk correctly, if it is present.

}

\begin{figure}[!htbp]
\centering
\includegraphics[width=1\textwidth]{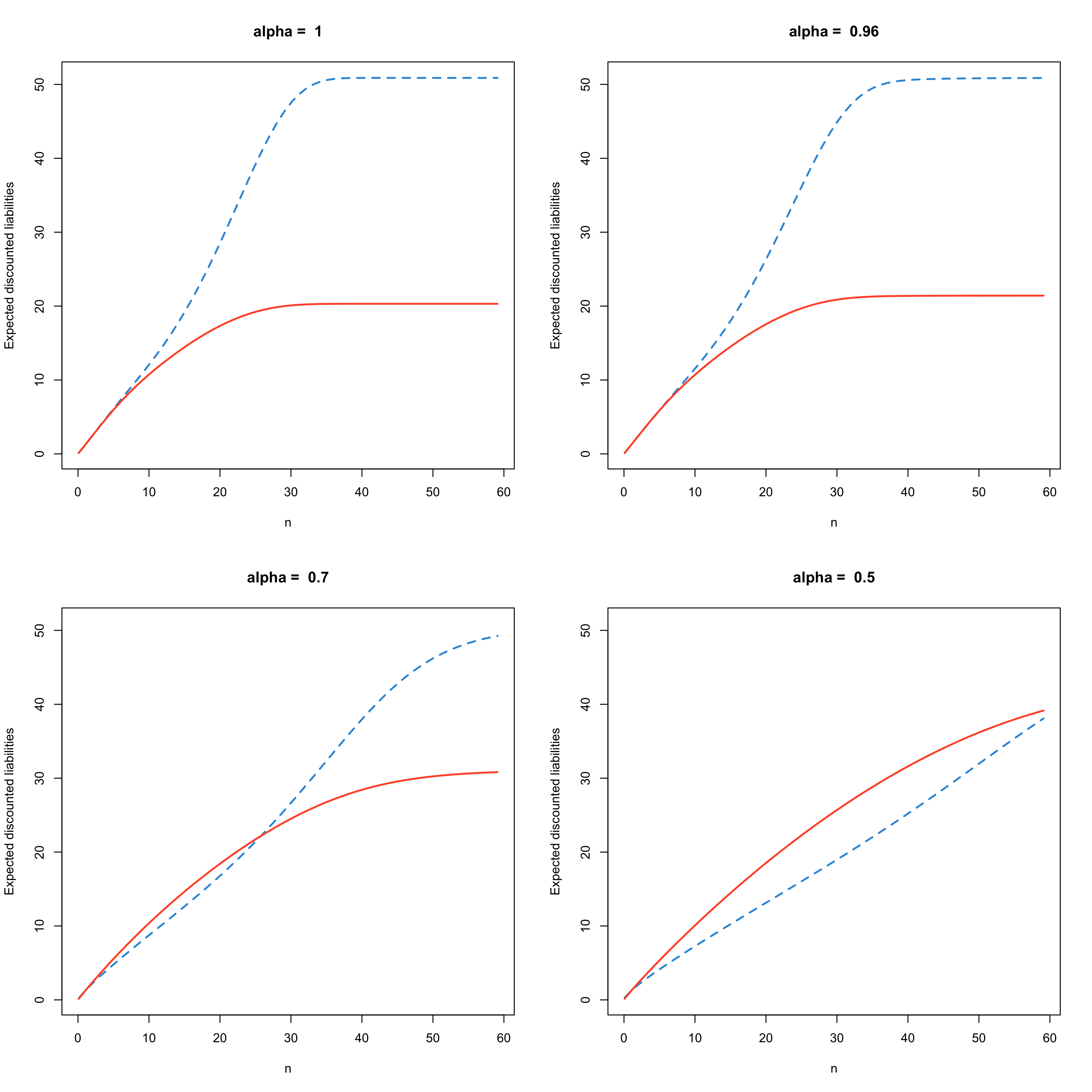}
\caption{Expected discounted liabilities corresponding to the contracts C1 (red, solid) and C2 (blue, dashed), for different choices of $\alpha$, as a function of the upper time limit $n$.} \label{fig:liabilities}
\end{figure}

\section{Conclusion}

We have shown how closed-form formulae are attainable for the reserve in multi-state life insurance models, both in the inhomogeneous Markov and inhomogeneous fractional-time non-Markov cases. The main tool enabling the generalization is the Mittag-Leffler function and its central role as the solution to the Kolmogorov fractional equation. The proposed models have very flexible absorption time distribution, and in the fractional case can account for longevity risk.

The use of product integrals and Van-Loan-type matrix decompositions in a fractional setting has not yet been developed. It is the next natural step towards painting a full picture of inhomogeneous fractional chains and their use for calculating moments of reserves associated with multi-state models with memory. {\color{black} Additionally, a full statistical account, including time-varying coefficients, is a promising line of research to bring these models one step closer to real-life applications.}

\textbf{Acknowledgement.} The author would like to acknowledge financial support from the Swiss National Science Foundation Project 200021\_191984.

\bibliographystyle{apalike}
\bibliography{ILI_final}

\end{document}